\documentclass[a4paper,twoside]{article}
\usepackage{a4}
\usepackage{amssymb}
\usepackage{amsmath}
\usepackage{upref}
\usepackage[active]{srcltx}
\usepackage[colorlinks,citecolor=blue,linkcolor=blue]{hyperref}
\usepackage[dvipsnames]{color}
\allowdisplaybreaks[2] 
%
%
%
\newcount\minutes \newcount\hours
\hours=\time
\divide\hours 60
\minutes=\hours
\multiply\minutes -60
\advance\minutes \time
\newcommand{\klockan}{\the\hours:{\ifnum\minutes<10 0\fi}\the\minutes}
\newcommand{\tid}{\today\ \klockan}
\newcommand{\prtid}{\smash{\raise 10mm \hbox{\LaTeX ed \tid}}}
\renewcommand{\prtid}{}
%
%
\makeatletter
\pagestyle{headings}
\headheight 10pt
\def\sectionmark#1{} 
\def\subsectionmark#1{}
\newcommand{\sectnr}{\ifnum \c@secnumdepth >\z@
                 \thesection.\hskip 1em\relax \fi}
\def\@evenhead{\footnotesize\rm\thepage\hfil\leftmark\hfil\llap{\prtid}}
\def\@oddhead{\footnotesize\rm\rlap{\prtid}\hfil\rightmark\hfil\thepage}
\def\tableofcontents{\section*{Contents} 
 \@starttoc{toc}}
\makeatother
%
%
\makeatletter
\def\@biblabel#1{#1.}
\makeatother
%
%
%
\makeatletter
\let\Thebibliography=\thebibliography
\renewcommand{\thebibliography}[1]{\def\@mkboth##1##2{}\Thebibliography{#1}
\addcontentsline{toc}{section}{References}
\frenchspacing 
\setlength{\@topsep}{0pt}
\setlength{\itemsep}{0pt}%
\setlength{\parskip}{0pt plus 2pt}%
}
\makeatother
%
%
\makeatletter
\def\mdots@{\mathinner.\nonscript\!.%
 \ifx\next,.\else\ifx\next;.\else\ifx\next..\else
 \nonscript\!\mathinner.\fi\fi\fi}
\let\ldots\mdots@
\makeatother
%
%
\makeatletter
\let\Enumerate=\enumerate
\renewcommand{\enumerate}{\Enumerate%
\setlength{\@topsep}{0pt}
\setlength{\itemsep}{0pt}%
\setlength{\parskip}{0pt plus 1pt}%
\renewcommand{\theenumi}{\textup{(\alph{enumi})}}%
\renewcommand{\labelenumi}{\theenumi}%
}
\let\endEnumerate=\endenumerate
\renewcommand{\endenumerate}{\endEnumerate\unskip}
\makeatother
%
%
\makeatletter
\def\@seccntformat#1{\csname the#1\endcsname.\quad}
\makeatother
%
%
\newcommand{\authortitle}[2]{\author{#1}\title{#2}\markboth{#1}{#2}}
%
%
\newcommand{\auth}[2]{{#2. #1}}
\newcommand{\authvar}[3]{{#2. #1 #3}}
\newcommand{\art}[6]{{\sc #1, \rm #2, \it #3\/ \bf #4 \rm (#5), \mbox{#6}.}}
\newcommand{\artprep}[3]{{\sc #1, \rm #2, \it #3.}}
\newcommand{\artin}[3]{{\sc #1, \rm #2,  in #3.}}
\newcommand{\arttoappear}[3]{{\sc #1, \rm #2, to appear in \it #3}}
\newcommand{\book}[3]{{\sc #1, \it #2, \rm #3.}}
\newcommand{\AND}{{\rm and }}
%
%
\RequirePackage{amsthm}
\newtheoremstyle{descriptive}%
  {\topsep}   
  {\topsep}   
  {\rmfamily} 
  {}          
  {\bfseries} 
  {.}         
  { }         
  {}          
\newtheoremstyle{propositional}%
  {\topsep}   
  {\topsep}   
  {\itshape}  
  {}          
  {\bfseries} 
  {.}         
  { }         
  {}          
\theoremstyle{propositional}
\newtheorem{thm}{Theorem}[section]
\newtheorem{prop}[thm]{Proposition}
\newtheorem{lem}[thm]{Lemma}
\newtheorem{cor}[thm]{Corollary}
\theoremstyle{descriptive}
\newtheorem{deff}[thm]{Definition}
\newtheorem{example}[thm]{Example}
\newtheorem{remark}[thm]{Remark}
%
%
%
%
%
\makeatletter
\renewenvironment{proof}[1][\proofname]{\par
  \pushQED{\qed}%
  \normalfont
  \trivlist
  \item[\hskip\labelsep
        \itshape
    #1\@addpunct{.}]\ignorespaces
}{%
  \popQED\endtrivlist\@endpefalse
}
\makeatother
%
%
\newcommand{\setm}{\setminus}
\renewcommand{\emptyset}{\varnothing}
%
%
%
%
%
\def\vint{\mathop{\mathchoice%
          {\setbox0\hbox{$\displaystyle\intop$}\kern 0.22\wd0%
           \vcenter{\hrule width 0.6\wd0}\kern -0.82\wd0}%
          {\setbox0\hbox{$\textstyle\intop$}\kern 0.2\wd0%
           \vcenter{\hrule width 0.6\wd0}\kern -0.8\wd0}%
          {\setbox0\hbox{$\scriptstyle\intop$}\kern 0.2\wd0%
           \vcenter{\hrule width 0.6\wd0}\kern -0.8\wd0}%
          {\setbox0\hbox{$\scriptscriptstyle\intop$}\kern 0.2\wd0%
           \vcenter{\hrule width 0.6\wd0}\kern -0.8\wd0}}%
          \mathopen{}\int}
%
%
\newcommand{\Cp}{{C_p}}
\DeclareMathOperator{\diam}{diam}
\DeclareMathOperator{\capp}{cap}
\newcommand{\cp}{\capp_p}
\newcommand{\cone}{\capp_1}
\newcommand{\cpw}{\capp_{p,w}}
\newcommand{\cpbw}{\capp_{p,\bw}}
\newcommand{\conebw}{\capp_{1,\bw}}
\DeclareMathOperator{\dist}{dist}
\newcommand{\loc}{_{\rm loc}}
\newcommand{\simge}{\gtrsim}
\newcommand{\simle}{\lesssim}
{\catcode`p =12 \catcode`t =12 \gdef\eeaa#1pt{#1}}      
\def\accentadjtext#1{\setbox0\hbox{$#1$}\kern   
                \expandafter\eeaa\the\fontdimen1\textfont1 \ht0 }
\def\accentadjscript#1{\setbox0\hbox{$#1$}\kern 
                \expandafter\eeaa\the\fontdimen1\scriptfont1 \ht0 }
\def\accentadjscriptscript#1{\setbox0\hbox{$#1$}\kern   
                \expandafter\eeaa\the\fontdimen1\scriptscriptfont1 \ht0 }
\def\accentadjtextback#1{\setbox0\hbox{$#1$}\kern       
                -\expandafter\eeaa\the\fontdimen1\textfont1 \ht0 }
\def\accentadjscriptback#1{\setbox0\hbox{$#1$}\kern     
                -\expandafter\eeaa\the\fontdimen1\scriptfont1 \ht0 }
\def\accentadjscriptscriptback#1{\setbox0\hbox{$#1$}\kern 
                -\expandafter\eeaa\the\fontdimen1\scriptscriptfont1 \ht0 }
\def\itoverline#1{{\mathsurround0pt\mathchoice
        {\rlap{$\accentadjtext{\displaystyle #1}
                \accentadjtext{\vrule height1.593pt}
                \overline{\phantom{\displaystyle #1}
                \accentadjtextback{\displaystyle #1}}$}{#1}}
        {\rlap{$\accentadjtext{\textstyle #1}
                \accentadjtext{\vrule height1.593pt}
                \overline{\phantom{\textstyle #1}
                \accentadjtextback{\textstyle #1}}$}{#1}}
        {\rlap{$\accentadjscript{\scriptstyle #1}
                \accentadjscript{\vrule height1.593pt}
                \overline{\phantom{\scriptstyle #1}
                \accentadjscriptback{\scriptstyle #1}}$}{#1}}
        {\rlap{$\accentadjscriptscript{\scriptscriptstyle #1}
                \accentadjscriptscript{\vrule height1.593pt}
                \overline{\phantom{\scriptscriptstyle #1}
                \accentadjscriptscriptback{\scriptscriptstyle #1}}$}{#1}}}}
%
%
\newcommand{\al}{\alpha}
\newcommand{\alp}{\alpha}
\newcommand{\ga}{\gamma}
\newcommand{\dmu}{d\mu}
\newcommand{\de}{\delta}
\newcommand{\eps}{\varepsilon}
\newcommand{\la}{\lambda}
\newcommand{\Om}{\Omega}
\newcommand{\p}{{$p\mspace{1mu}$}}
\newcommand{\R}{\mathbf{R}}
\newcommand{\Sp}{\mathbf{S}}
\newcommand{\Z}{\mathbf{Z}}
\newcommand{\Ga}{\Gamma}
\newcommand{\bw}{\widetilde{w}}
\newcommand{\bmu}{\tilde{\mu}}

%
%
%
%
%
%
%
\newcommand{\limplus}{{\mathchoice{\vcenter{\hbox{$\scriptstyle +$}}}
  {\vcenter{\hbox{$\scriptstyle +$}}}
  {\vcenter{\hbox{$\scriptscriptstyle +$}}}
  {\vcenter{\hbox{$\scriptscriptstyle +$}}}
}}

%
%
\newcommand{\Np}{N^{1,p}}
\newcommand{\Nploc}{N^{1,p}\loc}
\newcommand{\Xhat}{\widehat{X}}

\makeatletter
\newcommand{\setcurrentlabel}[1]{\def\@currentlabel{#1}}
\makeatother
\newcounter{saveenumi}
%
%
\numberwithin{equation}{section}
\newcommand{\imp}{\mathchoice{\quad \Longrightarrow \quad}{\Rightarrow}
                {\Rightarrow}{\Rightarrow}}
\newenvironment{ack}{\medskip{\it Acknowledgement.}}{}

\begin{document}

\authortitle{Anders Bj\"orn, Jana Bj\"orn
    and Juha Lehrb\"ack}
{The annular decay property and capacity estimates for thin annuli}

\author{
Anders Bj\"orn \\
\it\small Department of Mathematics, Link\"oping University, \\
\it\small SE-581 83 Link\"oping, Sweden\/{\rm ;}
\it \small anders.bjorn@liu.se
\\
\\
Jana Bj\"orn \\
\it\small Department of Mathematics, Link\"oping University, \\
\it\small SE-581 83 Link\"oping, Sweden\/{\rm ;}
\it \small jana.bjorn@liu.se
\\
\\
Juha Lehrb\"ack \\
\it\small Department of Mathematics and Statistics, University of Jyv\"askyl\"a,\\
\it\small P.O. Box 35\/ \textup{(}MaD\/\textup{)}, FI-40014 University of Jyv\"askyl\"a, Finland\/{\rm ;}
\it \small juha.lehrback@jyu.fi
\\
}

\date{}

\maketitle

\noindent{\small
 {\bf Abstract}. 
We obtain upper and lower bounds for the nonlinear variational
capacity of thin annuli in weighted $\R^n$ and in
metric spaces,
primarily under the assumptions of an annular decay property
and a Poincar\'e inequality.
In particular, if the measure has the $1$-annular decay property at $x_0$ and
the metric space supports a pointwise $1$-Poincar\'e inequality at $x_0$,
then the upper and lower bounds are comparable and we get a two-sided
estimate for thin annuli centred at $x_0$, which generalizes the known
estimate for the usual variational capacity in unweighted $\R^n$.
Most of our estimates are sharp, which we show by supplying several key
counterexamples.
We also characterize the $1$-annular decay property.
}

\bigskip

\noindent {\small \emph{Key words and phrases}:
Annular decay property, capacity,  
doubling measure,  
metric space, 
Newtonian space, Poincar\'e inequality, 
Sobolev space, thin annulus, upper gradient, variational capacity,
weighted $\R^n$.
}

\medskip

\noindent {\small Mathematics Subject Classification (2010):
Primary: 31E05; Secondary: 30L99, 31C15, 31C45.
}

\section{Introduction}

We assume throughout the paper that $1 \le p<\infty$ 
and that $X=(X,d,\mu)$ is a metric space equipped
with a metric $d$ and a positive complete  Borel  measure $\mu$ 
such that $0<\mu(B)<\infty$ for all balls $B \subset X$.
We also let $x_0\in X$ be  a fixed but arbitrary
point and $B_r=B(x_0,r)=\{x : d(x,x_0)<r\}$.

In this paper, we continue the study of sharp estimates for
the variational capacity $\cp(B_r,B_R)$, 
which we started in
Bj\"orn--Bj\"orn--Lehrb\"ack~\cite{BBL}. 
Therein we concentrated on the
case $0<2r\le R$, while in the present work we are interested in the
case where the 
annulus $B_R\setminus B_r$ is thin,
that is, $0<\frac 1 2 R\le r < R$.

Assume for a moment that the measure $\mu$ is doubling and that
the space $X$ supports a \p-Poincar\'e inequality.
Then it is well known that 
$\cp(B_r,B_{2r})\simeq \mu(B_r)r^{-p}$ holds for all $0<r<\frac{1}{8} \diam X$.
If in addition the exponents $0<q\le q'<\infty$ are such that 
\begin{equation}\label{eq:measure lower-upper}
\Bigl(\frac rR\Bigr)^{q'} \simle \frac{\mu(B_r)}{\mu(B_R)}
\simle \Bigl(\frac rR\Bigr)^{q},
\quad \text{if }
0<r\le R<\diam X,
\end{equation}
then, by~\cite[Theorem~1.1]{BBL}, 
\begin{equation}   \label{eq-from-ringcap}
\cp(B_r,B_R)\simeq \begin{cases}
     \mu(B_r)r^{-p}, & \quad\text{if } p<q, \\
     \mu(B_R)R^{-p}, & \quad\text{if } p>q',
                           \end{cases}
\end{equation}
when $0<2r\le R<\frac{1}{4} \diam X$. 
However, when $r$ is close to $R$ these estimates are no longer valid;
in particular, typically $\cp(B_r,B_R)\to\infty$ when 
$r \to R$ and $p>1$
(see Section~\ref{sect-infty} for more on when this holds).
Moreover, the difference in the 
growth bounds in~\eqref{eq:measure lower-upper} does not play any role
when $r$ is close to $R$, and so it is obvious that other properties of the space
determine the capacities of thin annuli. 

In (unweighted) $\R^n$ the following equalities hold
for capacities of annuli for all $0<r<R<\infty$ (see e.g.~\cite[p.~35]{HeKiMa}):
\begin{equation*}
 \cp(B_r,B_R) = \begin{cases}
              C(n,p)|R^{(p-n)/(p-1)}-r^{(p-n)/(p-1)}|^{1-p}, 
    & \text{if } p\notin \{1,n\},\\
              C(n,p)\bigl(\log\frac{R}{r}\bigr)^{1-n}, & \text{if } p = n,\\
              C(n,p)r^{n-1}, & \text{if } p = 1.
                      \end{cases}
\end{equation*}
When $0 < \tfrac{1}{2} R\le r < R$, these yield  the estimate
\begin{equation}\label{eq:in-R^n}
 \cp(B_r,B_R) \simeq \Bigl(1-\frac r R\Bigr)^{1-p}\frac{m(B_R)}{R^p},
\end{equation}
where $m$ is the $n$-dimensional Lebesgue measure.

The main goal in this paper is to find general conditions for the space $X$ 
under which estimates similar to \eqref{eq:in-R^n} hold. 
One such condition is the following measure decay property, which 
will play a crucial role in our results.

\begin{deff} \label{def-annular-decay}
Let $\eta > 0$. The measure $\mu$  has the 
\emph{$\eta$-annular decay \textup{(}$\eta$-AD\/\textup{)} 
property at $x \in X$},  
if there is a constant $C$ such that for all radii $0<r<R$ we have
\begin{equation}\label{eq:annular decay}
 \mu(B(x,R)\setminus B(x,r))\leq C \Bigl(1-\frac r R\Bigr)^\eta\mu(B(x,R)).
\end{equation}

If there is a common constant $C$ such that
\eqref{eq:annular decay} holds for all $x\in X$ (and all radii $0<r<R$),
then
$\mu$ has the 
\emph{global $\eta$-AD property}.
\end{deff}

For most of the results in this paper it will
be enough to require a pointwise AD property at $x_0$,
often together with pointwise versions of
(reverse) doubling and Poincar\'e inequalities.
This resembles the situation in \cite{BBL},
where for capacity estimates for nonthin annuli, such as~\eqref{eq-from-ringcap}, 
it was enough to require doubling (and reverse-doubling)
and Poincar\'e inequalities to hold pointwise.

The global AD property
was introduced (under the name volume regularity property)
in Colding--Minicozzi~\cite[p.\ 125]{ColdingMiniCPAM98}
for manifolds 
and independently by Buckley~\cite{Buck},
who called it the annular decay property,
for general metric spaces.
A variant of the global AD property was already used
in David--Journ\'e--Semmes~\cite[p.\ 41]{DaJoSe85}.
Later, the global AD-property  
has been used by many other authors.
See e.g.\ Buckley~\cite{Buck} and Routin~\cite{Routin} 
for more information and applications of 
the global AD property.
We have not seen any considerations related to
the pointwise AD property in the literature.

If $X$ is a length space
and $\mu$ is globally doubling, 
then $\mu$ has the global $\eta$-AD property for some $\eta>0$,
see the proof of Lemma~3.3 in \cite{ColdingMiniCPAM98}.
Example~\ref{ex-starving-snake} shows that the length space assumption
cannot be dropped.

The AD property implies 
the following upper bound for the variational capacity.

\begin{prop}\label{prop-cap upper decay-intro}
Assume that $\mu$ has the $\eta$-AD property
at $x_0$.
Then
\begin{equation} \label{eq-cap upper decay-intro}
\cp(B_r,B_R)\simle \Bigl(1-\frac {r} R\Bigr)^{\eta-p}\frac{\mu(B_R)}{R^p},
\quad \text{if } 0<r<R.
\end{equation}
If $\mu$ has the global $\eta$-AD property,
then the implicit constant is independent of $x_0$.
\end{prop}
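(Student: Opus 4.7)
The plan is to obtain \eqref{eq-cap upper decay-intro} by testing the variational capacity with the standard radial Lipschitz cut-off and then invoking the $\eta$-AD property to control the measure of the annulus on which the upper gradient is supported.

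Concretely, I would first set
$$u(x)=\min\biggl\{1,\frac{(R-d(x,x_0))_+}{R-r}\biggr\},$$
which equals $1$ on $B_r$, vanishes outside $B_R$, and is $1/(R-r)$-Lipschitz on $X$, since $x\mapsto d(x,x_0)$ is $1$-Lipschitz and composition with the $1$-Lipschitz truncation $t\mapsto\min\{1,t_+\}$ preserves Lipschitz constants. Consequently $g=(R-r)^{-1}\chi_{B_R\setm B_r}$ is an upper gradient of $u$, and $u$ is an admissible test function for $\cp(B_r,B_R)$: it is Lipschitz with compact support in $B_R$, so it lies in $\Np_0(B_R)$, and $u\ge 1$ on $B_r$.

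The rest is then a two-line computation. Plugging this test pair into the definition of capacity gives
$$\cp(B_r,B_R)\le \int_X g^p\,\dmu=\frac{\mu(B_R\setm B_r)}{(R-r)^p},$$
and the $\eta$-AD property \eqref{eq:annular decay} bounds the right-hand side by
$$\frac{C(1-r/R)^\eta\mu(B_R)}{(R-r)^p}=C\Bigl(1-\frac{r}{R}\Bigr)^{\eta-p}\frac{\mu(B_R)}{R^p},$$
where the last identity uses $(R-r)^{-p}=R^{-p}(1-r/R)^{-p}$. This is exactly \eqref{eq-cap upper decay-intro}.

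There is no serious obstacle; the only point that requires a moment of care is admissibility of $u$ in the paper's definition of $\cp$, which is immediate from its Lipschitz character and compact support in $B_R$. For the final assertion, if the $\eta$-AD property holds globally then the constant $C$ in \eqref{eq:annular decay} is independent of the centre, and this independence is transferred verbatim to the implicit constant in \eqref{eq-cap upper decay-intro}.
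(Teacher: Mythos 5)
Your proof is correct and takes essentially the same route as the paper: test $\cp(B_r,B_R)$ with a radial Lipschitz cut-off (the paper uses $u=(1-\dist(x,B_r)/(R-r))_\limplus$, which has the same upper gradient $(R-r)^{-1}\chi_{B_R\setminus B_r}$ as yours), deduce $\cp(B_r,B_R)\le \mu(B_R\setminus B_r)/(R-r)^p$, and then invoke the $\eta$-AD property and the identity $(R-r)^{-p}=R^{-p}(1-r/R)^{-p}$. The paper factors the first inequality out as Lemma~\ref{lem-upper-simple}, but the content and the test function are the same as yours.
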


The proof of this result is quite simple, and it is perhaps 
more interesting that there are similar lower bounds and that
the estimate is sharp, as we show in 
Example~\ref{ex-Buckley-new}.
The sharpness is true even if one assumes that $\mu$ 
has the global $\eta$-AD property.

Lower bounds for capacities are in general
considerably more difficult to obtain than upper bounds.
Here we use relatively simple means to obtain lower bounds 
similar to the upper bounds,
so that we obtain two-sided estimates 
as in~\eqref{eq:in-R^n}. 
The key assumption is, as usual, some type of Poincar\'e inequality. 
When both the $1$-AD property and the
$1$-Poincar\'e inequality are available,
our upper and lower bounds coincide, and we obtain the
following generalization of~\eqref{eq:in-R^n}, which is our main result.

\begin{thm}\label{thm-the-nice-case-intro}
Assume 
that $X$ 
supports a global\/ $1$-Poincar\'e inequality
and that $\mu$ has the global\/ $1$-AD property.
Then
\begin{equation} \label{eq-thm-the-nice-case-intro}
\cp(B_r,B_R)\simeq \Bigl(1-\frac {r} R\Bigr)^{1-p}\frac{\mu(B_R)}{R^p},
\quad \text{if\/ } 0<\frac{R}{2}\le r<R \le \frac{\diam X}{3}.
\end{equation}
\end{thm}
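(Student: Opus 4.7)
The upper bound in~\eqref{eq-thm-the-nice-case-intro} is immediate from Proposition~\ref{prop-cap upper decay-intro} applied with $\eta=1$, using only the global $1$-AD hypothesis. My plan is therefore to establish the matching lower bound
\[
 \cp(B_r,B_R) \simge \Bigl(1-\frac{r}{R}\Bigr)^{1-p}\frac{\mu(B_R)}{R^p}
\]
via the global $1$-Poincar\'e inequality.

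For this lower bound, I would apply the $1$-PI on a family of balls that straddle the inner sphere $\partial B_r$. Take $u$ admissible for the capacity with upper gradient $g$, fix a maximal $(R-r)$-separated set $\{z_i\}_{i\in I}\subset \partial B_r$, and set $B_i = B(z_i,2(R-r))$. Since the $1$-PI forces both doubling of $\mu$ and quasiconvexity of $X$, and since $R \le \tfrac13\diam X$ leaves room on the outside, each $B_i$ should contain a sub-ball inside $B_r$ (where $u=1$) and another inside $X\setm B_R$ (where $u=0$), each of $\mu$-measure comparable to $\mu(B_i)$. Consequently $\vint_{B_i}|u - u_{B_i}|\,d\mu \simge 1$, and the $1$-PI on $B_i$ combined with H\"older's inequality yields $\int_{\lambda B_i}g^p\,d\mu \simge \mu(B_i)/(R-r)^p$. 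Summing over $i$ with bounded overlap (from doubling) gives
\[
 \int_X g^p\,d\mu \simge \frac{1}{(R-r)^p}\sum_{i\in I}\mu(B_i).
\]

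The remaining ingredient is $\sum_i\mu(B_i) \simge (R-r)\mu(B_R)/R$. Because quasiconvexity forces $\bigcup_i B_i$ to cover an annular region of the form $B_{r+c(R-r)}\setm B_{r-c(R-r)}$, this reduces to a \emph{reverse} $1$-AD-type inequality $\mu(B_{r+c(R-r)}\setm B_{r-c(R-r)}) \simge (R-r)\mu(B_R)/R$. This is the main obstacle, since the $1$-AD hypothesis provides only the opposite inequality; I anticipate deriving it from the interplay of $1$-PI, doubling and the diameter restriction $R\le\tfrac13\diam X$, for instance by testing the $1$-PI against the radial Lipschitz function $x\mapsto\min(d(x,x_0),R)$, whose mean oscillation on $B_R$ is controlled from below by the measure of radial rings once the outside of $B_R$ is non-trivial. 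Granted this reverse estimate, the preceding display yields $\int_X g^p\,d\mu \simge (1-r/R)^{1-p}\mu(B_R)/R^p$ for every admissible pair, completing the proof.
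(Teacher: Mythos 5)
Your upper bound is correct and identical to the paper's (Proposition~\ref{prop-cap upper decay-intro} with $\eta=1$). For the lower bound, however, your route diverges from the paper's and contains a genuine gap.

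You propose to cover the inner sphere by small balls $B_i=B(z_i,2(R-r))$ and apply the $1$-Poincar\'e inequality on each, which requires each $B_i$ to contain a sub-ball lying inside $B_r$ \emph{and} a sub-ball lying outside $B_R$, both of measure comparable to $\mu(B_i)$. This is precisely the geometric hypothesis in Theorem~\ref{thm-annular-PI}, and your combination of Theorem~\ref{thm-annular-PI} with a reverse annular estimate is essentially Corollary~\ref{cor-Jana-2}. But that geometric hypothesis is \emph{not} part of the hypotheses of Theorem~\ref{thm-the-nice-case-intro}, and the paper explicitly points out that it cannot be dropped in Theorem~\ref{thm-annular-PI} (see the discussion there and Example~\ref{ex-starving-snake}). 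The inner sub-balls do follow from quasiconvexity (which global $1$-PI plus doubling gives), but the existence of the outer sub-balls $B_i''\subset B_i\setm B_R$ is a nontrivial geometric assumption on the behaviour of $X$ near $\partial B_R$; ``$R\le\tfrac13\diam X$ leaves room on the outside'' only guarantees the existence of \emph{some} points outside $B_R$, not points near \emph{every} $z_i\in\partial B_r$. You would need to argue that global $1$-PI together with global $1$-AD forces this local-escape property, and that is not done here (nor is it obvious that it is true).

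The paper sidesteps this entirely by applying the Poincar\'e inequality on a \emph{single} large ball $B_{\tau\lambda R}$ (Theorem~\ref{thm-PI-and-AD} with $q=1$, and Proposition~\ref{prop-PI-lower-p=1} for $p=1$): reverse-doubling at $x_0$ (which follows from connectedness via Proposition~\ref{prop-connected}) gives $|u-u_{B_{\tau R}}|\simge 1$ on $B_{R/2}$, so
\[
1 \;\simle\; R\,\mu(B_R)^{-1/q}\,\biggl(\int_{B_R\setminus B_r} g_u^{\,q}\,d\mu\biggr)^{1/q}
\;\le\; R\,\mu(B_R)^{-1/q}\,\mu(B_R\setminus B_r)^{1/q-1/p}\,\biggl(\int_{B_R\setminus B_r} g_u^{\,p}\,d\mu\biggr)^{1/p},
\]
and since $1/q-1/p<0$, the $1$-AD \emph{upper} bound $\mu(B_R\setminus B_r)\simle(1-r/R)\mu(B_R)$ is exactly what produces the \emph{lower} bound on $\cp(B_r,B_R)$. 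Thus no reverse measure estimate and no ball covering are needed. This is the key idea you miss: the $1$-AD hypothesis enters the lower bound constructively, through a H\"older exponent of the right sign, rather than a reverse annular decay having to be derived separately.
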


As in Proposition~\ref{prop-cap upper decay-intro} it is actually
enough to require pointwise versions of the assumptions, but then
the result is a bit more complicated to formulate; see 
Theorem~\ref{thm-the-nice-case} for the exact statement.
Nevertheless, even with 
global assumptions the parameters are sharp, 
see Example~\ref{ex-weighted-bow-tie}.
A different type of
two-sided estimate for capacity is obtained in Theorem~\ref{thm-annular-PI}.

The $1$-AD property, 
which Buckley~\cite{Buck} calls ``strong annular decay'',
is essential in both the 
upper and lower bounds of Theorem~\ref{thm-the-nice-case-intro}.
The $1$-AD property is even locally the best possible
AD property, under very mild assumptions,
see Proposition~\ref{prop:decay}.
To further illustrate this useful
property, we establish several characterizations of the $1$-AD property
in Section~\ref{sect-char-1-annular}.

\begin{ack}
A.\ B.\ and J.\ B.\
were supported by the Swedish Research Council.
J.\ L.\ was supported by the Academy of Finland (grant no.\ 252108) and the 
V\"ais\"al\"a Foundation of the Finnish Academy of Science and Letters.
Part of this research was done during several visits
of J.\ L.\ 
to Link\"oping University in 2012--15,
and one visit of A.\ B.\ to the University of Jyv\"askyl\"a in 2015. 
\end{ack}

\section{Preliminaries}
\label{sect-prelim}

In this section we introduce the necessary background notation on metric
spaces and in particular on Sobolev spaces and capacities in metric spaces.
See the monographs Bj\"orn--Bj\"orn~\cite{BBbook} and
Heinonen--Koskela--Shanmugalingam--Tyson~\cite{HKSTbook}
for more extensive treatments of these topics, including
proofs of most of the results mentioned in this section.

A \emph{curve} is a continuous mapping from an interval,
and a \emph{rectifiable} curve is a curve with finite length.
We will only consider curves which are nonconstant, compact
and 
rectifiable, and thus each curve can 
be parameterized by its arc length $ds$. 
The metric space $X$ is a \emph{length space} 
if whenever $x,y\in X$ and $\eps>0$,
there is a curve between $x$ and $y$ with length less than $(1+\eps)d(x,y)$.

A property is said to hold for \emph{\p-almost every curve}
if it fails only for a curve family $\Ga$ with zero \p-modulus, 
i.e.\ there exists $0\le\rho\in L^p(X)$ such that 
$\int_\ga \rho\,ds=\infty$ for every curve $\ga\in\Ga$.
Following Heinonen--Kos\-ke\-la~\cite{HeKo98},
we introduce upper gradients 
as follows 
(they called them very weak gradients).

\begin{deff} \label{deff-ug}
A Borel function $g \colon X \to [0,\infty]$  is an \emph{upper gradient} 
of a function $f\colon X \to [-\infty,\infty]$
if for all  curves  
$\gamma \colon [0,l_{\gamma}] \to X$,
\begin{equation} \label{ug-cond}
        |f(\gamma(0)) - f(\gamma(l_{\gamma}))| \le \int_{\gamma} g\,ds,
\end{equation}
where the left-hand side is considered to be $\infty$ 
whenever at least one of the 
terms therein is infinite.
If $g\colon X \to [0,\infty]$ is measurable 
and \eqref{ug-cond} holds for \p-almost every curve,
then $g$ is a \emph{\p-weak upper gradient} of~$f$. 
\end{deff}

The \p-weak upper gradients were introduced in
Koskela--MacManus~\cite{KoMc}. It was also shown there
that if $g \in L^p(X)$ is a \p-weak upper gradient of $f$,
then one can find a sequence $\{g_j\}_{j=1}^\infty$
of upper gradients of $f$ such that $g_j \to g$ in $L^p(X)$.
If $f$ has an upper gradient in $L^p(X)$, then
it has an a.e.\ unique \emph{minimal \p-weak upper gradient} $g_f \in L^p(X)$
in the sense that for every \p-weak upper gradient $g \in L^p(X)$ of $f$ we have
$g_f \le g$ a.e., see Shan\-mu\-ga\-lin\-gam~\cite{Sh-harm}
and Haj\l asz~\cite{Haj03}. 
Following Shanmugalingam~\cite{Sh-rev}, 
we define a version of Sobolev spaces on the metric measure space $X$.

\begin{deff} \label{deff-Np}
For a measurable function $f\colon X\to[-\infty,\infty]$, let 
\[
        \|f\|_{\Np(X)} = \biggl( \int_X |f|^p \, \dmu 
                + \inf_g  \int_X g^p \, \dmu \biggr)^{1/p},
\]
where the infimum is taken over all upper gradients $g$ of $f$.
The \emph{Newtonian space} on $X$ is 
\[
        \Np (X) = \{f: \|f\|_{\Np(X)} <\infty \}.
\]
\end{deff}
\medskip

The quotient space $\Np(X)/{\sim}$, where  $f \sim h$ if and only if $\|f-h\|_{\Np(X)}=0$,
is a Banach space and a lattice, see Shan\-mu\-ga\-lin\-gam~\cite{Sh-rev}.
In this paper we assume that functions in $\Np(X)$ are defined everywhere,
not just up to an equivalence class in the corresponding function space.
This is needed for the definition of upper gradients to make sense.
If $f,h \in \Nploc(X)$, then $g_f=g_h$ a.e.\ in $\{x \in X : f(x)=h(x)\}$,
in particular $g_{\min\{f,c\}}=g_f \chi_{\{f < c\}}$ for $c \in \R$.

\begin{deff}
The \emph{Sobolev \p-capacity} of an arbitrary set $E\subset X$ is
\[
\Cp(E) = \inf_u\|u\|_{\Np(X)}^p,
\]
where the infimum is taken over all $u \in \Np(X)$ such that
$u\geq 1$ on $E$.
\end{deff}

The Sobolev capacity is countably subadditive and it
is the correct gauge 
for distinguishing between two Newtonian functions. 
If $u \in \Np(X)$, then $u \sim v$ if and only if they differ only
in a set of capacity zero.
Moreover, 
if $u,v \in \Np(X)$ and $u= v$ a.e., then $u \sim v$. 
This is the main reason why, unlike in the classical Euclidean setting, 
we do not need to 
require the functions admissible in the definition of capacity to be $1$ in a 
neighbourhood of $E$. 
In (weighted or unweighted) $\R^n$,
$\Cp$ is the usual Sobolev capacity and $\Np(\R^n)$ and $\Np(\Om)$ are the 
refined Sobolev spaces as in
Heinonen--Kilpel\"ainen--Martio~\cite[p.~96]{HeKiMa},
see Bj\"orn--Bj\"orn~\cite[Theorem~6.7\,(ix) and Appendix~A.2]{BBbook}.

\begin{deff}
The measure
$\mu$ is \emph{doubling at $x$} if there 
is a constant $C>0$ such that 
\begin{equation}\label{eq:doubling-x0}
  \mu(B(x,2r))\le C \mu(B(x,r))
\quad \text{whenever }r>0.
\end{equation}
If \eqref{eq:doubling-x0} holds with the same constant $C>0$
for all $x \in X$, we say that $\mu$ is 
\emph{\textup{(}globally\/\textup{)} doubling}.

We also say that the measure $\mu$ is \emph{reverse-doubling at $x$}, if there
are constants $\ga,\tau>1$ such that 
\[ 
  \mu(B(x,\tau r))\ge \ga \mu(B(x,r))
  \quad \text{for all } 0<r\le \diam X/2\tau,
\] 
and that the measure $\mu$ is \emph{Ahlfors $Q$-regular} if 
$\mu(B(x,r)) \simeq r^Q$ for all $x\in X$ and all $r>0$.
\end{deff}

The global doubling condition is often assumed in the metric space literature, 
but for many of 
our estimates it will be enough to assume that $\mu$ is doubling at $x$.
If $X$ is connected, or more generally uniformly perfect
(see Heinonen~\cite{heinonen}),
and $\mu$ is globally doubling, then
$\mu$ is also reverse-doubling at every point, with uniform constants.
In the connected case,  one can choose $\tau>1$ arbitrarily and find $\ga>1$ 
independent of $x$, see e.g.\ Corollary~3.8 in~\cite{BBbook}.
If $\mu$ is merely doubling at $x$, then
the reverse-doubling at $x$ does not follow automatically
and has to be imposed separately whenever needed.

The $\eta$-AD property at $x_0$ easily implies that $\mu$ is doubling
at $x_0$. 
The converse is not true even if $X$ is a length space,
as seen by considering $m+\de_1$ on $\R$, where $\de_1$
is the Dirac measure at $1$, 
which is doubling at $0$, but
does not have the $\eta$-AD property at $0$ for any $\eta>0$.
(For an absolutely continuous example,
consider $\R$ equipped with the measure $w\,dx$, where
 $w(x)=\max\{1,1/|x-1|(\log|x-1|)^2\}$.)

\begin{deff} \label{def-PI}
We say that $X$ supports a \emph{\p-Poincar\'e inequality at $x$} if
there exist constants $C>0$ and $\lambda \ge 1$
such that for all balls $B=B(x,r)$, 
all integrable functions $f$ on $X$, and all (\p-weak)
upper gradients $g$ of $f$, 
\[ 
        \vint_{B} |f-f_B| \,\dmu
        \le C r \biggl( \vint_{\lambda B} g^{p} \,\dmu \biggr)^{1/p},
\] 
where $ f_B 
 :=\vint_B f \,\dmu 
:= \int_B f\, d\mu/\mu(B)$.
If $C$ and $\lambda$ are independent of $x$, we say that
$X$ supports a \emph{\textup{(}global\/\textup{)} \p-Poincar\'e inequality}.
\end{deff}

A nonnegative function  $w$ on $\R^n$
 is a \emph{\p-admissible weight} if
$d\mu:=w\,dx$ is globally doubling and $\R^n$ equipped
with $\mu$ supports a global \p-Poincar\'e inequality.
See Corollary~20.9 in \cite{HeKiMa}
(which is only in the second edition) and
Proposition~A.17 in \cite{BBbook}
for why this is equivalent to other definitions in the literature.

It is well known that if $X$ supports a global 
\p-Poincar\'e inequality, then $X$ is connected, but in fact
even a pointwise \p-Poincar\'e inequality is sufficient for this.

\begin{prop} \label{prop-connected}
If $X$ supports a \p-Poincar\'e inequality at $x_0$,
then $X$ is connected and 
$\Cp(S_R)>0$ for
every sphere $S_R=\{x:d(x,x_0)=R\}$ with $R<\diam X/2$.
\end{prop}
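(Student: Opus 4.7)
The plan is to deduce both assertions from the pointwise \p-Poincar\'e inequality by applying it to suitable characteristic functions that admit the identically zero function as a \p-weak upper gradient.

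For the connectedness part, suppose instead that $X = U \cup V$ with $U$ and $V$ disjoint, nonempty and open (hence clopen); say $x_0 \in U$, and pick $y \in V$. Then $f := \chi_U$ has $g \equiv 0$ as an upper gradient, since every curve is contained in a single component. Applying the Poincar\'e inequality at $x_0$ on $B = B(x_0, r)$ with any $r > d(x_0, y)$ forces $\vint_B |f - f_B|\,\dmu = 0$, which contradicts the fact that $B$ meets both $U$ and $V$ in sets of positive measure.

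For the sphere assertion, suppose $\Cp(S_R) = 0$. The standard correspondence between Sobolev capacity zero and zero \p-modulus of the curve family meeting the set (see \cite{BBbook}) yields that \p-almost every curve $\gamma$ avoids $S_R$. By the intermediate value theorem applied to $t \mapsto d(\gamma(t), x_0)$, such a curve cannot have one endpoint in $A := \{x : d(x, x_0) \le R\}$ and the other in $X \setminus A$. Hence $g \equiv 0$ is a \p-weak upper gradient of $f := \chi_A$, and by \cite{KoMc} there exist honest upper gradients $g_j$ of $f$ with $\|g_j\|_{L^p(X)} \to 0$.

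Since $R < \diam X / 2$, there exists $y \in X$ with $d(x_0, y) > R$. Applying the \p-Poincar\'e inequality at $x_0$ to $f$ and each $g_j$ on $B = B(x_0, 2 d(x_0, y))$ and letting $j \to \infty$ forces $\vint_B |f - f_B|\,\dmu = 0$. This contradicts the fact that $B_R \subset B$ has positive $\mu$-measure (on which $f = 1$), while $B(y, d(x_0, y) - R) \subset B$ is a ball of positive $\mu$-measure disjoint from $A$ (on which $f = 0$). The one nontrivial ingredient is the capacity/modulus correspondence that supplies the zero \p-weak upper gradient of $\chi_A$; once this is in hand, everything else is a direct use of the pointwise Poincar\'e inequality.
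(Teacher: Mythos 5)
Your proof is correct and follows essentially the same approach as the paper: deduce from $\Cp(S_R)=0$ that $0$ is a \p-weak upper gradient of the characteristic function of the ball (equivalently, of $A=\itoverline{B}_R$), and then violate the pointwise Poincar\'e inequality on a ball large enough to contain both $B_R$ and a ball of positive measure outside $A$. The paper uses $B_{2R}$ and cites [BBbook, Prop.~4.2] for the connectedness part, whereas you spell out the $\chi_U$ argument and take the larger ball $B(x_0,2d(x_0,y))$, which conveniently sidesteps having to verify that $B_{2R}\setminus\itoverline{B}_R$ has positive measure. The detour through \cite{KoMc} to replace the \p-weak upper gradient $0$ by honest upper gradients $g_j\to 0$ is unnecessary here, since Definition~\ref{def-PI} already allows \p-weak upper gradients in the Poincar\'e inequality, but it is harmless.
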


In particular, if $\mu$ is 
globally doubling and $X$ supports a global Poincar\'e inequality,
then $\mu$ is reverse-doubling and $\tau>1$ can be chosen arbitrarily. 

\begin{proof}
The first part is shown in the same way as in Proposition~4.2
in \cite{BBbook}.
For the second part, assume that $\Cp(S_R)=0$.
Then $0$ is a \p-weak upper gradient of $\chi_{B_R}$,
as \p-almost no curve intersects $S_R$, see
\cite[Proposition~1.48]{BBbook}.
Thus the \p-Poincar\'e inequality is violated for $B_{2R}$.
\end{proof}

\begin{deff}
Let $\Om\subset X$ be open. The \emph{variational 
\p-capacity} of $E\subset \Om$ with respect to $\Om$ is
\[
\cp(E,\Om) = \inf_u\int_{\Om} g_u^p\, d\mu,
\]
where the infimum is taken over all $u \in \Np(X)$
such that
$\chi_E \le u \le 1$ in $E$ and $u=0$ on $X \setm E$;
we call such functions $u$ \emph{admissible} for
$\cp(E,\Om)$. 
\end{deff}

Also the 
variational capacity 
is countably subadditive and coincides with the usual variational
capacity 
(see Bj\"orn--Bj\"orn~\cite[Theorem~5.1]{BBvarcap} for a proof
valid in weighted $\R^n$). 

Throughout the paper, we write $a \simle b$ if there is an implicit
 constant $C>0$ such that $a \le Cb$, where $C$ is independent of the 
essential parameters involved. We also write $a \simge b$ if $b \simle a$,
and $a \simeq b$ if $a \simle b \simle a$.

Recall that $x_0\in X$ is  a fixed but arbitrary
point and $B_r=B(x_0,r)$.

\section{Upper bounds for capacity}
\label{sect-upper bounds}

In this section we prove Proposition~\ref{prop-cap upper decay-intro}
and show its sharpness.

\begin{lem}  \label{lem-upper-simple}
If\/ $0 < r <  R $, then
\[ 
\cp(B_r,B_R)\le \frac{\mu(B_R\setm B_r)}{(R-r)^p}.
\] 
\end{lem}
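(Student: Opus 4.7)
The proof is the standard one-line capacity estimate via an explicit Lipschitz test function, so I do not expect any real obstacle. The plan is to write down a single admissible competitor and bound one of its upper gradients.

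First, define the truncated ``tent'' function
\[
  u(x) = \min\biggl\{1,\frac{(R-d(x,x_0))_+}{R-r}\biggr\},
\]
so that $u \equiv 1$ on $B_r$, $u \equiv 0$ on $X\setm B_R$, and $u$ interpolates linearly in $d(\cdot,x_0)$ on the annulus $B_R\setm B_r$. This $u$ is $1/(R-r)$-Lipschitz on $X$ and is thus clearly admissible for $\cp(B_r,B_R)$, since it is bounded and supported in $\itoverline{B_R}$, which has finite measure.

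Next, I would verify that
\[
  g = \frac{1}{R-r}\chi_{B_R\setm B_r}
\]
is an upper gradient of $u$. Since $d(\cdot,x_0)$ has $1$ as an upper gradient, and $u$ is a composition of the $1/(R-r)$-Lipschitz truncation $t\mapsto\min\{1,(R-t)_+/(R-r)\}$ with $d(\cdot,x_0)$, the chain-type rule gives that $g$ is an upper gradient of $u$ (this is standard; alternatively, verify \eqref{ug-cond} directly by splitting any curve into pieces inside and outside $\itoverline{B_R}\setm B_r$, noting that $u$ is constant on each of the latter pieces).

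Finally, since $g \in L^p(X)$ and $u$ is admissible, the definition of variational capacity and the fact that the minimal $p$-weak upper gradient is bounded above by any upper gradient yield
\[
  \cp(B_r,B_R) \le \int_{B_R} g_u^p\,\dmu
  \le \int_X g^p\,\dmu
  = \frac{\mu(B_R\setm B_r)}{(R-r)^p},
\]
which is the claimed bound.
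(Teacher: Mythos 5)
Your proof is correct and follows essentially the same approach as the paper's: exhibit an explicit $1/(R-r)$-Lipschitz cutoff that is $1$ on $B_r$ and $0$ off $B_R$, observe that $(R-r)^{-1}\chi_{B_R\setm B_r}$ is an upper gradient, and integrate. The only cosmetic difference is the choice of test function (the paper uses $u=(1-\dist(x,B_r)/(R-r))_+$ rather than your $\min\{1,(R-d(x,x_0))_+/(R-r)\}$), but both are admissible, both admit the same upper gradient on $B_R\setm B_r$, and both give the stated bound.
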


\begin{proof}
The function
\[u(x)=\biggl(1-\frac{\dist(x,B_r)}{R-r}\biggr)_\limplus\] 
is admissible  for 
$\cp(B_r,B_{R})$, and $g=(R-r)^{-1}\chi_{B_R\setminus B_r}$
is an upper gradient of $u$.
We thus obtain that
\[
\cp(B_r,B_{R})  \le \int_{B_R} g^p\,d\mu = \frac{\mu(B_{R}\setminus B_r)}{(R-r)^{p}}.
 \qedhere
\]
\end{proof}

\begin{proof}[Proof of Proposition~\ref{prop-cap upper decay-intro}]
Using the $\eta$-AD property at $x_0$ and Lemma~\ref{lem-upper-simple},
we obtain that
\begin{align*}
\cp(B_r,B_{R}) & \le  \frac{\mu(B_{R}\setminus B_r)}{(R-r)^{p}} 
   \simle  \biggl(\frac {R-r} R\biggr)^\eta \frac{\mu(B_{R})}{(R-r)^{p}} 
     =  \biggl(\frac {R-r} R\biggr)^{\eta-p}\frac{\mu(B_{R})}{R^p}. \qedhere
\end{align*}
\end{proof}

\begin{remark} \label{rmk-upper-bound}
Note that if $\mu$ has no AD property (in which case we could 
say that $\mu$ has the ``$0$''-AD property), then 
Lemma~\ref{lem-upper-simple}
still gives
\[
\cp(B_r,B_{R})\le \Bigl(1-\frac r R\Bigr)^{-p}\frac{\mu(B_{R})}{R^{p}}.
\]
This is sharp by Example~\ref{ex-Buckley-new} below.

Moreover, if $\mu$ has local $\eta$-AD at $x_0$, in the sense that
there is some $R_0>0$ such that \eqref{eq:annular decay}
 holds for all $0<r<R<R_0$ as in 
Proposition~\ref{prop:decay} below, then \eqref{eq-cap upper decay-intro} holds
whenever $0<r<R<R_0$.
Similar local versions hold also for our other results.

It follows directly from the proof that the 
constant $C$ from Definition~\ref{def-annular-decay}
can be used as the 
implicit constant
in \eqref{eq-cap upper decay-intro}.
\end{remark}

The following example shows that 
Proposition~\ref{prop-cap upper decay-intro}
is sharp.

\begin{example} \label{ex-Buckley-new}
(This example 
has been
inspired by Example~1.3 in Buckley~\cite{Buck}.)
Let $x_0=0$, $0 < \eta < 1$ and $d \mu=w \, dx$ on $\R^n$, $n \ge 1$, where 
$w(x)=w(|x|)$ and 
\[
   w(\rho)=\max\{1,|\rho-1|^{\eta -1}\}.
\]
This is a Muckenhoupt $A_1$-weight, by
Theorem~II.3.4 in Garc\'\i a-Cuerva--Rubio de Francia~\cite{garcia-cuerva}, and
it is thus $1$-admissible, by Theorem~4 in J.~Bj\"orn~\cite{JB-Fenn}.
It is easily verified that $\mu(B_R)\simeq R^n$ for all $R>0$.
We also see that $\mu(B_1 \setm B_r) \simeq (1-r)^\eta$,
if $\frac{1}{2}\le r \le 1$.
One can check that this is the extreme case showing that the measure $\mu$
has the  global $\eta$-AD property (and that $\eta$ is optimal).
By Proposition~10.8 in Bj\"orn--Bj\"orn--Lehrb\"ack~\cite{BBL},
for $p>1$ and $\tfrac{1}{2} \le r < 1$,
\begin{align}
    \cpw(B_r,B_1) 
   &\simeq      \biggl(\int_r^1 (w(\rho) \rho^{n-1})^{1/(1-p)}  \, d\rho \biggr)^{1-p} 
\label{eq-ringcap-est}
\\
   & \simeq \biggl( \int_r^1 (1-\rho)^{(\eta-1)/(1-p)}  \, d\rho \biggr)^{1-p}
   \simeq (1-r)^{\eta-p}, \notag
\end{align}
which shows that the upper bound in 
Proposition~\ref{prop-cap upper decay-intro}
is sharp, with $R=1$ fixed and $p>1$.

Now let $d\bmu = \bw\,dx$ and $d\mu_j=w_j\,dx$, where 
\[
\bw(\rho):=\sum_{j=0}^\infty a_j w_j(\rho), 
\quad \text{with }
w_j(\rho):= w(q_j \rho), \quad j=0,1,\ldots,
\]
for some countable set $\{q_j\}_{j=0}^\infty\subset (0,\infty)$
(e.g.\ all positive rational numbers)
and $a_j>0$ 
such that $\sum_{j=0}^\infty a_j<\infty$.
A change of variables shows that 
$\mu_j(B_R)=q_j^{-n} \mu(B_{q_j R}) \simeq R^n$ and hence
$\bmu(B_R)\simeq R^n$.
Moreover, for $0<r\le R$ and $x \in X$,
\begin{align*}
\bmu(B(x,R)\setm B(x,r)) &= \sum_{j=0}^\infty a_j \mu_j(B(x,R)\setm B(x,r)) \\
   &= \sum_{j=0}^\infty a_j q_j^{-n} \mu(B(q_jx,q_j R)\setm B(q_jx,q_j r)) \\
   &\simle \sum_{j=0}^\infty a_j q_j^{-n} \Bigl(1-\frac{r}{R}\Bigr)^\eta 
      \mu(B(q_jx,q_j R)) \\
   &=  \Bigl(1-\frac{r}{R}\Bigr)^\eta \bmu(B(x,R)),
\end{align*}
i.e.\ $\bmu$ has the global $\eta$-AD property as well.
Since $\bw\ge a_j w_j$ for every $j=0,1,\ldots$\,, we also see that $\eta$
is optimal.
Similarly, for every ball $B(x,r)\subset\R^n$, as $w$ is an $A_1$-weight,
\[
\vint_{B(x,r)} w_j\,dx = \vint_{B(q_j x,q_j r)} w\,dx \simle \inf_{B(q_j x,q_j r)} w
= \inf_{B(x,r)} w_j
\]
and summing over all $j$ shows that $\bw$ is an $A_1$-weight.
Finally, using Proposition~10.8 in~\cite{BBL} again
together with \eqref{eq-ringcap-est}, we obtain for $p>1$ and
$\tfrac12 q_j^{-1}\le r <R=q_j^{-1}$, 
\begin{align*}
\cpbw(B_r,B_R) 
   &\simeq q_j^{1-n} \biggl( \int_r^R \biggl( 
   \sum_{k=0}^\infty a_{k}  w(q_{k}\rho) 
        \biggr)^{1/(1-p)} \, d\rho \biggr)^{1-p} \\
   & \simge a_j q_j^{1-n} \biggl( q_j^{-1} \int_{q_j r}^1 w(\rho)^{1/(1-p)} 
        \, d\rho \biggr)^{1-p} \\
   &\simeq a_j q_j^{p-n}  \cpw(B_{q_j r},B_1) \\
   &\simeq a_j \Bigl(1-\frac{r}{R}\Bigr)^{\eta-p}  \frac{\bmu(B_R)}{R^p},
\end{align*}
and letting $r\nearrow R$ shows that the upper bound in 
Proposition~\ref{prop-cap upper decay-intro} is sharp for all $R=q_j^{-1}$.

For $p=1$  we cannot use Proposition~10.8 in~\cite{BBL}.
Instead we do as follows.
Let $\tfrac12 q_j^{-1}\le r <R=q_j^{-1}$, where $j=0,1,\ldots$\,.
Let $u$ be admissible for 
$\conebw(B_r,B_R)$,
and let $g$ be an upper gradient of $u$.
We then get, using the unweighted capacity $\cone(B_r,B_R)$ 
and~\eqref{eq:in-R^n}, 
\begin{align*}
\int_{\R^n} g \, d\bmu 
   & \ge \int_{\Sp^{n-1}} \int_r^R g (\rho \omega) \bw(\rho) \rho^{n-1}
      \,d\rho \, d\omega \\
   &\simge a_j w_j(r)  \int_{\Sp^{n-1}} \int_r^R g (\rho \omega) 
      \rho^{n-1} \, d\rho \, d\omega \\
   &\ge a_j (1-q_j r)^{\eta-1} \cone(B_r,B_R) \\
   & \simeq a_j \Bigl(1-\frac{r}{R}\Bigr)^{\eta -1} \frac{\mu(B_R)}{R}.
\end{align*}
Taking infimum over all admissible $u$ shows that 
Proposition~\ref{prop-cap upper decay-intro}
is sharp also for $p=1$.
\end{example}

We have the following observation showing
that the exponent $\eta=1$ is the largest that
can occur in the AD property, even locally, under a very mild assumption.

\begin{prop} \label{prop:decay}
Let $x_0\in X$ and $R_0 >0$, and assume that 
$\mu(\{x_0\})=0$.
If~\eqref{eq:annular decay} holds for some $\eta>0$ and
all\/ $0<r<R<R_0$, then $\eta\le 1$.
\end{prop}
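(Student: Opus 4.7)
The plan is to argue by contradiction: assuming $\eta>1$, iteratively split a fixed annulus $B_R\setminus B_r$ into $N$ concentric sub-annuli of equal thickness, apply the $\eta$-AD property on each sub-annulus, and let $N\to\infty$. The extra factor $N$ coming from the number of sub-annuli will be beaten by the factor $N^{-\eta}$ coming from the $\eta$-th power of the thickness, giving $\mu(B_R\setminus B_r)=0$, which via $\mu(\{x_0\})=0$ forces $\mu(B_R)=0$, contradicting the standing hypothesis that balls have positive measure.

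More concretely, fix $0<r<R<R_0$ and $N\ge 1$, and set $r_i=r+i(R-r)/N$ for $i=0,1,\ldots,N$, so that $r_0=r$ and $r_N=R$. Applying \eqref{eq:annular decay} with outer radius $r_{i+1}$ and inner radius $r_i$ gives
\[
\mu(B_{r_{i+1}}\setminus B_{r_i})\le C\Bigl(1-\frac{r_i}{r_{i+1}}\Bigr)^{\eta}\mu(B_{r_{i+1}})
\le C\Bigl(\frac{R-r}{N r}\Bigr)^{\eta}\mu(B_R),
\]
since $r_{i+1}-r_i=(R-r)/N$ and $r_{i+1}\ge r$. Summing over $i=0,\ldots,N-1$ yields
\[
\mu(B_R\setminus B_r)\le C N^{1-\eta}\Bigl(\frac{R-r}{r}\Bigr)^{\eta}\mu(B_R).
\]
If $\eta>1$, then the right-hand side tends to $0$ as $N\to\infty$, so $\mu(B_R\setminus B_r)=0$ for every choice of $0<r<R<R_0$.

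To close the argument, pick a sequence $r_k\searrow 0$. Since $B_R\setminus B_{r_k}\nearrow B_R\setminus\{x_0\}$, monotone convergence together with $\mu(\{x_0\})=0$ gives
\[
\mu(B_R)=\mu(B_R\setminus\{x_0\})=\lim_{k\to\infty}\mu(B_R\setminus B_{r_k})=0,
\]
which contradicts $0<\mu(B_R)<\infty$. Hence $\eta\le 1$.

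The main (and essentially only) obstacle is choosing the splitting so that the Abel-type cancellation between the number of pieces and the thickness factor actually works; equal-thickness sub-annuli with radii bounded below by $r>0$ do the job cleanly, and the hypothesis $\mu(\{x_0\})=0$ is used precisely at the last step to convert the annular identity into a contradiction on the full ball.
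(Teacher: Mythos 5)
Your proof is correct, and it takes a genuinely different route from the paper's. The paper decomposes the \emph{whole ball} $B_R$ into $K$ equal-width annuli $B_{iR/K}\setminus B_{(i-1)R/K}$, for which the $\eta$-AD factors are $\bigl(1-\frac{i-1}{i}\bigr)^\eta = i^{-\eta}$; the contributions near the centre are not uniformly small, so the argument hinges on the convergence of $\sum_i i^{-\eta}$ (which is exactly where $\eta>1$ is used), followed by a subtraction trick: splitting the sum at some $k$ so that the tail contributes at most $\frac12\mu(B_R)$, then letting $K\to\infty$ so that $\mu(B_{kR/K})\to\mu(\{x_0\})=0$. You instead decompose a \emph{fixed annulus} $B_R\setminus B_r$ with $r>0$ into $N$ equal-thickness pieces, for which the AD factors are all bounded by $\bigl(\frac{R-r}{Nr}\bigr)^\eta$ uniformly in $i$; summing $N$ such terms gives $N^{1-\eta}\to 0$ directly, without needing any series-splitting, at the cost of a separate limiting step $r\searrow 0$ to reach the origin. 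Both routes invoke $\mu(\{x_0\})=0$ at the end (the paper implicitly, via $\mu(B_{kR/K})\to\mu(\{x_0\})$). Your version makes the ``number of pieces versus $\eta$-th power of thickness'' mechanism transparent in a single line, whereas the paper's decomposition treats the ball in one pass but trades that for a slightly more delicate two-parameter ($k$ and $K$) argument.
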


\begin{proof}
Let $0<R<R_0$. 
Using~\eqref{eq:annular decay} 
we obtain for all integers $1\le k\le K$,
\begin{align}
 \mu(B_R)  &=\sum_{i=1}^{K}\mu(B_{iR/K}\setminus B_{(i-1)/RK})
     \le C \sum_{i=1}^{K} \biggl(1-\frac {i-1}{i} \biggr)^\eta\mu(B_{iR/K})     
  \notag \\
&\le C \sum_{i=1}^{k} {i}^{-\eta} \mu(B_{kR/K}) 
      + C \sum_{i=k+1}^\infty {i}^{-\eta} \mu(B_R),
      \label{eq:many annuli-new}
\end{align}
where $C$ is the constant in \eqref{eq:annular decay}.
If $\eta>1$, the series $\sum_{i=1}^\infty {i}^{-\eta}$ converges and
we can find $k$ such that $C \sum_{i=k+1}^\infty {i}^{-\eta}\le\tfrac12$.
Thus, subtracting the last term in \eqref{eq:many annuli-new} from the
left-hand side yields
\[
\mu(B_R) \simle \mu(B_{kR/K}) \to0, \quad \text{as }K\to\infty,
\]
which is impossible. 
Thus $\eta\le1$.
\end{proof}

\section{Lower bounds for capacity}
\label{sect:lower}

We now turn to lower estimates for capacities of thin annuli. The following
is our main estimate for obtaining the lower bound
in Theorem~\ref{thm-the-nice-case-intro}.
As usual for lower bounds,
 a key assumption is some sort of a  Poincar\'e inequality.

\begin{thm}\label{thm-PI-and-AD}
Assume that\/ $1\le q<p<\infty$, that 
$X$ supports a $q$-Poincar\'e inequality at $x_0$,
and that 
$\mu$ has the $\eta$-AD property
at $x_0$ and
is reverse-doubling at $x_0$ with dilation $\tau>1$.
Then
\begin{equation} \label{eq-PI-and-AD}
\cp(B_r,B_R)\simge \Bigl(1-\frac {r} R\Bigr)^{\eta(q-p)/q}\frac{\mu(B_R)}{R^p},
\quad  \text{if\/ }0<\frac{R}{2} \le r<R \le \frac{\diam X}{2\tau}.
\end{equation}
\end{thm}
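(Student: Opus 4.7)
The plan is to obtain a lower bound on $\int g_u^q\,d\mu$ for any admissible function $u$ via the $q$-Poincar\'e inequality, and then to upgrade it to a lower bound on $\int g_u^p\,d\mu$ by H\"older's inequality, using the $\eta$-AD property to control the measure of the support of $g_u$.

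First I would fix $u$ admissible for $\cp(B_r,B_R)$; after truncation we may assume $0 \le u \le 1$, so $u \equiv 1$ on $B_r$ and $u \equiv 0$ on $X \setminus B_R$. The remarks on minimal $p$-weak upper gradients in Section~\ref{sect-prelim} (comparing $g_u$ with the gradient of the constant functions $0$ and $1$) then give $g_u = 0$ a.e.\ on $\{u=0\} \cup \{u=1\}$, so $g_u$ is supported, up to a null set, in $B_R \setminus B_r$.

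Next I would apply the Poincar\'e inequality on the enlarged ball $B_{R_1} := B_{\tau R}$. Since $R \le \diam X/2\tau$, reverse-doubling at $x_0$ gives $\mu(B_{R_1}) \ge \ga\mu(B_R)$, whence
\[
u_{B_{R_1}} \le \frac{\mu(B_R)}{\mu(B_{R_1})} \le \frac{1}{\ga} < 1.
\]
At the same time, because $r \ge R/2$ and the $\eta$-AD property implies doubling at $x_0$, one has $\mu(B_r) \simeq \mu(B_R) \simeq \mu(B_{R_1})$, giving $u_{B_{R_1}} \simge 1$. Consequently $1-u_{B_{R_1}}$ is bounded below by a positive constant, and
\[
\vint_{B_{R_1}} |u - u_{B_{R_1}}|\,d\mu \ge (1 - u_{B_{R_1}}) \frac{\mu(B_r)}{\mu(B_{R_1})} \simge 1.
\]
The $q$-Poincar\'e inequality at $x_0$ on $B_{R_1}$ then yields
\[
\int_{\lambda B_{R_1}} g_u^q\,d\mu \simge \frac{\mu(\lambda B_{R_1})}{R^q} \simeq \frac{\mu(B_R)}{R^q},
\]
where the last step uses doubling at $x_0$ iteratively to absorb the factor $\lambda$.

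Finally, H\"older's inequality with exponents $p/q$ and $p/(p-q)$ (for which $q < p$ is essential), together with the support bound and the $\eta$-AD estimate $\mu(B_R\setminus B_r) \simle (1-r/R)^\eta \mu(B_R)$, gives
\[
\int g_u^q\,d\mu \le \Bigl(\int g_u^p\,d\mu\Bigr)^{q/p} \mu(B_R\setminus B_r)^{(p-q)/p} \simle \Bigl(\int g_u^p\,d\mu\Bigr)^{q/p} \Bigl(\Bigl(1-\frac{r}{R}\Bigr)^\eta \mu(B_R)\Bigr)^{(p-q)/p}.
\]
Combining with the previous lower bound, raising to the power $p/q$ and rearranging yields
\[
\int g_u^p\,d\mu \simge \Bigl(1 - \frac{r}{R}\Bigr)^{\eta(q-p)/q} \frac{\mu(B_R)}{R^p},
\]
and taking the infimum over admissible $u$ completes the proof. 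The one step that requires real thought is the choice of the ball for Poincar\'e: applying it directly on $B_R$ would leave $u_{B_R}$ possibly too close to $1$, so one must enlarge to $B_{\tau R}$, which is precisely why reverse-doubling (and the constraint $R \le \diam X/2\tau$) enters as a hypothesis.
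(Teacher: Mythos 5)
Your proof is correct and takes essentially the same route as the paper's: apply the $q$-Poincar\'e inequality on the enlarged ball $B_{\tau R}$ (using reverse-doubling to bound $u_{B_{\tau R}}$ away from $1$, the $\eta$-AD property for doubling at $x_0$, and $u\equiv 1$ on $B_r$ with $r\ge R/2$ to get the average difference $\simge 1$), then upgrade to an $L^p$-bound via H\"older together with $\mu(B_R\setm B_r)\simle (1-r/R)^\eta\mu(B_R)$. The only cosmetic difference is that you lower-bound the integral average over $B_{\tau R}$ directly, whereas the paper first bounds the integrand pointwise on $B_{R/2}$ and then compares averages by doubling; the one muddled sentence (``giving $u_{B_{R_1}}\simge 1$'') is not actually what you need nor what you use, but the displayed chain is correct as written.
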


If $\mu$ has the global $\eta$-AD property
and supports a global $q$-Poincar\'e inequality,
then the implicit constants are independent of $x_0$
and we may choose $\tau>1$ arbitrarily, see the proof of 
Theorem~\ref{thm-the-nice-case-intro} below.

Example~\ref{ex-weighted-1D} shows that the reverse-doubling
assumption cannot be dropped,
while  Example~\ref{ex-weighted-bow-tie} shows 
that it is not enough to assume that $X$ supports
global $q$-Poincar\'e inequalities for all $q>p$.
Moreover, Example~\ref{ex-starving-snake} shows that
the $\eta$-AD property cannot be replaced by the assumption
that $\mu$ is globally doubling or even Ahlfors regular.

\begin{proof}
Let $u$ be admissible for $\cp(B_r,B_R)$.
Then $u=1$ in $B_r$, $u=0$ outside $B_R$, 
and $g_u=0$ a.e.\ 
outside $B_R\setminus B_r$. 
The reverse-doubling implies that 
$\mu(B_{\tau R} \setm B_R) \simge \mu(B_R)$ 
from which it follows that 
$|u_{B_{\tau R}}|<c < 1$, and so $|u-u_{B_{\tau R}}| >  1-c>0$ in $B_{R/2}$.
Note that the $\eta$-AD property implies that $\mu$ is doubling at $x_0$.
Thus we obtain from the $q$-Poincar\'e inequality at $x_0$
and H\"older's inequality that
\begin{align*}
1 & \simle\vint_{B_{R/2}}|u-u_{B_{\tau R}}|\,d\mu 
   \simle \vint_{B_{\tau R}}|u-u_{B_{\tau R}}|\,d\mu
\simle R\biggl(\vint_{B_{\tau\lambda  R}} g_{u}^q \,d\mu \biggr)^{1/q}\\
& \simle \frac{R}{\mu(B_R)^{1/q}}\biggl(\int_{B_{R}\setminus B_r} g_{u}^q \,d\mu \biggr)^{1/q} \\
& \simle \frac{R}{\mu(B_R)^{1/q}} \mu(B_{R}\setminus B_r)^{1/q-1/p}\biggl(\int_{B_{R}\setminus B_r} g_{u}^p \,d\mu \biggr)^{1/p}.
\end{align*} 
By the $\eta$-AD property,
$\mu(B_{R}\setminus B_r)\simle (1-r/ R)^{\eta}\mu(B_R)$.
Inserting this into the above estimate yields 
\begin{align*}
\biggl(\int_{B_{R}\setm B_r} g_{u}^p \,d\mu \biggr)^{1/p} 
& \simge \frac {\mu(B_R)^{1/q}} R \Bigl(1-\frac {r} R\Bigr)^{\eta(1/p-1/q)}\mu(B_R)^{1/p-1/q}\\
& = \frac {\mu(B_R)^{1/p}} R \Bigl(1-\frac {r} R\Bigr)^{\eta(q-p)/pq},
\end{align*} 
and \eqref{eq-PI-and-AD} follows after taking infimum over all
admissible $u$.
\end{proof}

Theorem~\ref{thm-PI-and-AD} establishes the lower
bound in Theorem~\ref{thm-the-nice-case-intro} when $p>1$.
For $p=1$ we instead use the following result.
In view of Remark~\ref{rmk-upper-bound}, we can see
this as an $\eta=0$ version of Theorem~\ref{thm-PI-and-AD}.

\begin{prop}\label{prop-PI-lower-p=1}
Assume that $X$ supports a \p-Poincar\'e inequality at $x_0$,
and that $\mu$ is doubling at $x_0$ and reverse-doubling at $x_0$ 
with dilation $\tau>1$.
Then
\begin{equation} \label{eq-PI-and-AD-p=1}
\cp(B_r,B_R)\simge \frac{\mu(B_R)}{R^p},
\quad \text{if\/ }0<\frac{R}{2}\le r<R \le \frac{\diam X}{2\tau}.
\end{equation}
Moreover, 
$\cp(B_r,B_{2r})\simeq \mu(B_r)r^{-p}$ when  $0<  r \le \diam X/4\tau$.

If $\mu$ is globally doubling and 
$X$ supports a global \p-Poincar\'e inequality,
then the implicit constants are independent of $x_0$.
\end{prop}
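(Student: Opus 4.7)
My plan is to adapt the argument proving Theorem~\ref{thm-PI-and-AD} specialized to $q=p$: since we then apply Poincar\'e with the same exponent $p$, H\"older's inequality is no longer needed and no control on $\mu(B_R\setm B_r)$ enters, so the annular-decay hypothesis can be dispensed with. In view of Remark~\ref{rmk-upper-bound}, this is the ``$\eta=0$'' version of Theorem~\ref{thm-PI-and-AD}. Concretely, for \eqref{eq-PI-and-AD-p=1} I let $u$ be admissible for $\cp(B_r,B_R)$, so that $u=1$ on $B_r$, $u=0$ outside $B_R$, and $g_u=0$ a.e.\ outside $B_R\setm B_r$. The reverse-doubling at $x_0$ gives $\mu(B_R)/\mu(B_{\tau R})\le 1/\ga<1$, whence $u_{B_{\tau R}}\le 1/\ga$ and therefore $|u-u_{B_{\tau R}}|\simge 1$ on $B_{R/2}\subset B_r$. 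Doubling at $x_0$ lets me pass from the average on $B_{R/2}$ to the average on $B_{\tau R}$, and the $p$-Poincar\'e inequality at $x_0$ then gives
\begin{equation*}
1 \simle \vint_{B_{R/2}}|u-u_{B_{\tau R}}|\,d\mu
 \simle \vint_{B_{\tau R}}|u-u_{B_{\tau R}}|\,d\mu
 \simle R\biggl(\vint_{B_{\tau\lambda R}} g_u^p\,d\mu\biggr)^{1/p}.
\end{equation*}
Since $g_u$ vanishes outside $B_R$ and iterated doubling at $x_0$ bounds $\mu(B_{\tau\lambda R})$ by a constant multiple of $\mu(B_R)$, rearranging yields $\mu(B_R)/R^p \simle \int g_u^p\,d\mu$; taking the infimum over admissible $u$ gives \eqref{eq-PI-and-AD-p=1}.

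For the second equivalence, the upper bound follows from Lemma~\ref{lem-upper-simple} and doubling at $x_0$,
\begin{equation*}
\cp(B_r,B_{2r}) \le \frac{\mu(B_{2r})}{r^p} \simle \frac{\mu(B_r)}{r^p},
\end{equation*}
while the matching lower bound is \eqref{eq-PI-and-AD-p=1} applied with $R=2r$ (the range condition becoming $r\le \diam X/4\tau$), followed by one further doubling step. For the global statement, Proposition~\ref{prop-connected} shows that a global $p$-Poincar\'e inequality forces $X$ to be connected, so globally doubling $\mu$ is reverse-doubling at every point with uniform constants and arbitrary $\tau>1$ (as discussed before Definition~\ref{def-PI}), and all constants in the argument above then depend only on the global structural data.

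The only minor subtlety is that $\tau\lambda R$ may exceed $\diam X$, in which case $B_{\tau\lambda R}$ simply equals $X$; this does no harm because $g_u$ still vanishes outside $B_R$ and the iterated doubling bound $\mu(B_{\tau\lambda R})\simle \mu(B_R)$ continues to hold after finitely many radius doublings. I do not anticipate any serious obstacle here: once the proof of Theorem~\ref{thm-PI-and-AD} is in hand, this proposition is essentially obtained by setting $q=p$ and dropping the annular-decay step, with careful bookkeeping of the doubling and reverse-doubling constants.
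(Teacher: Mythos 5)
Your proof is correct and takes essentially the same route as the paper: the paper's own proof simply repeats the chain of estimates from Theorem~\ref{thm-PI-and-AD} with $q$ replaced by $p$ (so H\"older and the AD property drop out), then gets $\cp(B_r,B_{2r})\simeq\mu(B_r)r^{-p}$ from this together with Lemma~\ref{lem-upper-simple}. Your additional remarks about the case $\tau\lambda R>\diam X$ and the global constants are accurate bookkeeping but not a different method.
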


Example~\ref{ex-starving-snake} shows that the
lower estimate is sharp even under the assumptions
that $\mu$ is globally doubling (or Ahlfors regular) and 
$X$ supports a global $1$-Poincar\'e inequality.
Example~\ref{ex-weighted-bow-tie} shows that
the \p-Poincar\'e assumption cannot be weakened, even if
it is assumed globally.
Example~\ref{ex-weighted-1D-inc} shows that the doubling
assumption cannot be dropped
(not even if $X$ supports a global $1$-Poincar\'e inequality
and $\mu$ is globally reverse-doubling),
while Example~\ref{ex-weighted-1D} shows that the reverse-doubling
assumption cannot be dropped.
See also Proposition~\ref{prop-PI-lower-p=1-no-doubl}.

\begin{proof}
Let $u$ be admissible for $\cp(B_r,B_R)$.
As in the proof of Theorem~\ref{thm-PI-and-AD}
(with $q$ replaced by $p$), we get that
\begin{align*}
1 &   \simle R \biggl(\vint_{B_{\tau\lambda  R}} g_{u}^p\,d\mu\biggr)^{1/p} 
 \simle \frac{R}{\mu(B_R)^{1/p}}
   \biggl(\int_{B_{R}\setminus B_r} g_{u}^p \,d\mu\biggr)^{1/p},
\end{align*} 
and~\eqref{eq-PI-and-AD-p=1} follows after taking infimum over all
admissible $u$.
That $\cp(B_r,B_{2r})\simeq \mu(B_r)r^{-p}$ follows from this
and Lemma~\ref{lem-upper-simple}.
\end{proof}

\begin{thm}\label{thm-the-nice-case}
Assume 
that
$X$ supports a\/ $1$-Poincar\'e inequality at $x_0$
and that $\mu$ has the\/ $1$-AD property
at $x_0$ and 
is reverse-doubling at $x_0$ with dilation $\tau>1$.
Then
\[ 
\cp(B_r,B_R)\simeq \Bigl(1-\frac {r} R\Bigr)^{1-p}\frac{\mu(B_R)}{R^p},
\quad 
\text{if\/ } 0<\frac{R}{2} \le r<R \le \frac{\diam X}{2\tau}.
\] 
\end{thm}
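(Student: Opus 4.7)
The plan is to prove the theorem by simply assembling the matching upper and lower bounds that have already been established in the previous two sections; the point of the theorem is to identify the case in which those general bounds collapse to the same expression.

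For the upper bound I would invoke Proposition~\ref{prop-cap upper decay-intro} directly with $\eta=1$. Since $\mu$ has the $1$-AD property at $x_0$, this immediately yields
\[
\cp(B_r,B_R)\simle \Bigl(1-\frac r R\Bigr)^{1-p}\frac{\mu(B_R)}{R^p},
\]
with no further work needed. Note that Proposition~\ref{prop-cap upper decay-intro} does not require any Poincar\'e inequality or reverse-doubling, so the only hypothesis used here is the $1$-AD property.

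For the lower bound I would split into two cases according to whether $p>1$ or $p=1$. When $p>1$, I would apply Theorem~\ref{thm-PI-and-AD} with $q=1$ and $\eta=1$: the assumed $1$-Poincar\'e inequality at $x_0$ supplies the needed $q$-Poincar\'e inequality, the $1$-AD property at $x_0$ gives both the $\eta$-AD property and doubling at $x_0$, and reverse-doubling at $x_0$ is assumed. The exponent $\eta(q-p)/q$ in \eqref{eq-PI-and-AD} collapses to $1-p$, producing exactly the desired lower bound. When $p=1$ the target exponent is $0$, and the required lower bound $\cp(B_r,B_R)\simge \mu(B_R)/R^p$ is precisely the conclusion of Proposition~\ref{prop-PI-lower-p=1}, whose hypotheses (pointwise $p$-Poincar\'e inequality, doubling and reverse-doubling at $x_0$) are all satisfied.

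There is essentially no obstacle: the work has already been done in Proposition~\ref{prop-cap upper decay-intro}, Theorem~\ref{thm-PI-and-AD} and Proposition~\ref{prop-PI-lower-p=1}. The only observation worth highlighting is that $\eta=q=1$ is the borderline choice that forces the upper and lower exponents to agree, which is the structural reason the $1$-AD property together with the $1$-Poincar\'e inequality produces a sharp two-sided estimate generalizing the classical identity~\eqref{eq:in-R^n}.
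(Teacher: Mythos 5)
Your proposal is correct and is essentially identical to the paper's own proof: the upper bound comes from Proposition~\ref{prop-cap upper decay-intro} with $\eta=1$, and the lower bound from Theorem~\ref{thm-PI-and-AD} (with $q=\eta=1$) when $p>1$ and from Proposition~\ref{prop-PI-lower-p=1} when $p=1$.
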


Even under global assumptions, as in Theorem~\ref{thm-the-nice-case-intro},
the $1$-Poincar\'e and $1$-AD assumptions cannot be weakened,
as shown by Example~\ref{ex-weighted-bow-tie}.
Example~\ref{ex-weighted-1D} shows that the reverse-doubling
assumption cannot be dropped, and in particular that
it is possible that $\mu$ has the
$1$-AD property
at $x_0$ and $X$ supports a $1$-Poincar\'e inequality at $x_0$,
but that $\mu$ fails to be reverse-doubling at $x_0$.

\begin{proof}
This follows by combining 
Proposition~\ref{prop-cap upper decay-intro}
with Theorem~\ref{thm-PI-and-AD}
(for $p>1$) and Proposition~\ref{prop-PI-lower-p=1} (for $p=1$).
\end{proof}

\begin{proof}[Proof of Theorem~\ref{thm-the-nice-case-intro}]
It follows from the global assumptions and Proposition~\ref{prop-connected}
that  $X$ is connected. 
Hence, $X$ is reverse-doubling at $x_0$ with $\tau=\frac{3}{2}$.
As the implicit constants in Theorem~\ref{thm-the-nice-case} only
depend on the parameters in the assumptions,
Theorem~\ref{thm-the-nice-case-intro} follows.
\end{proof}

The following result gives a two-sided estimate of a different form. 

\begin{thm}  \label{thm-annular-PI}
Assume that $\mu$ is globally doubling
and that $X$ supports a global \p-Poincar\'e inequality.
Let\/ $0 < \tfrac{1}{2}R \le r <  R $ and $\de=R-r$.
Assume, in addition, that there exists $a>0$ such that
for every $x\in B_R\setm B_r$ there exist 
$x'$ and $x''$ so that $B(x',a\de)\subset B(x,2\de)\cap B_r$ and 
$B(x'',a\de)\subset B(x,2\de)\setm B_R$.
Then
\[ 
\cp(B_r,B_R)\simeq \frac{\mu(B_R\setm B_r)}{(R-r)^p}.
\] 
\end{thm}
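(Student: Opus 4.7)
The upper bound $\cp(B_r,B_R)\le \mu(B_R\setm B_r)/(R-r)^p$ is already given by Lemma~\ref{lem-upper-simple}, so the whole task is to establish the matching lower bound. My plan is to apply the global $p$-Poincar\'e inequality on each ball $B(x,2\de)$ with $x\in B_R\setm B_r$, exploit the geometric hypothesis that inside $B(x,2\de)$ one can find a sub-ball on which an admissible $u$ equals $1$ and another on which it equals $0$, and then patch the local estimates together with a covering argument.

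More concretely, let $u$ be admissible for $\cp(B_r,B_R)$ and fix $x\in B_R\setm B_r$. The hypothesis gives $B(x',a\de)\subset B(x,2\de)\cap B_r$ (where $u\equiv 1$) and $B(x'',a\de)\subset B(x,2\de)\setm B_R$ (where $u\equiv 0$). Since $B(x,2\de)\subset B(x',4\de)\cap B(x'',4\de)$, global doubling gives $\mu(B(x',a\de))\simeq \mu(B(x'',a\de))\simeq \mu(B(x,2\de))$. Depending on whether $u_{B(x,2\de)}$ is at most or greater than $\tfrac12$, the function $|u-u_{B(x,2\de)}|$ exceeds $\tfrac12$ on $B(x'',a\de)$ or on $B(x',a\de)$, and so in either case $\vint_{B(x,2\de)}|u-u_{B(x,2\de)}|\,d\mu\simge 1$. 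Inserting this bound into the global $p$-Poincar\'e inequality on $B(x,2\de)$, raising to the $p$th power, and using doubling to replace $\mu(B(x,2\la\de))$ by $\mu(B(x,2\de))$ (where $\la$ is the Poincar\'e dilation) yields
\[
\mu(B(x,2\de))\simle \de^p\int_{B(x,2\la\de)}g_u^p\,d\mu.
\]

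Next, I would apply the $5B$-covering lemma to select a countable set $\{x_i\}\subset B_R\setm B_r$ so that the balls $\{B(x_i,2\de/5)\}$ are pairwise disjoint while the fivefold enlargements $\{B(x_i,2\de)\}$ still cover $B_R\setm B_r$. Disjointness of $\{B(x_i,2\de/5)\}$ together with global doubling produces a bounded-overlap property for the larger family $\{B(x_i,2\la\de)\}$. Summing the localized estimate over $i$ then gives
\[
\mu(B_R\setm B_r)\le \sum_i\mu(B(x_i,2\de))\simle \de^p\sum_i\int_{B(x_i,2\la\de)}g_u^p\,d\mu\simle \de^p\int_X g_u^p\,d\mu,
\]
and the desired lower bound follows after taking the infimum over admissible $u$, noting that $g_u=0$ a.e.\ on $B_r\cup(X\setm B_R)$.

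The main technical point is verifying the bounded-overlap property for $\{B(x_i,2\la\de)\}$: one must check that the $2\la$-enlargement does not destroy the separation coming from the disjointness of $\{B(x_i,2\de/5)\}$. This is the standard doubling covering estimate and should go through routinely, but it is the only place where doubling is used in a nontrivial way beyond the comparison $\mu(B(x,2\la\de))\simeq \mu(B(x,2\de))$.
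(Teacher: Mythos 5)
Your proposal is correct and takes essentially the same route as the paper's proof: apply the global $p$-Poincar\'e inequality on balls $B(x,2\de)$ centred in the annulus, use the geometric hypothesis to bound $\vint_{B(x,2\de)}|u-u_{B(x,2\de)}|\,d\mu$ from below by a constant, and sum over a bounded-overlap cover of $B_R\setm B_r$. The minor deviations --- invoking the $5B$-covering lemma where the paper instead takes a maximal disjoint family $\{B(x_j,\de)\}$ and doubles the radii, and a symmetric case split on whether $u_{B(x,2\de)}\ge\tfrac12$ rather than the paper's one-sided estimate $u_{B_j}\le 1-\mu(B''_j)/\mu(B_j)\le c<1$ --- are cosmetic.
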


The balls $B(x',a\de)$ in the assumptions of 
Theorem~\ref{thm-annular-PI} always exist e.g.\ if $X$ is a length space.
The existence of the balls $B(x'',a\de)$ is more difficult to guarantee but
there are plenty of spaces where it is true. For example, $\R^n$ equipped 
with any \p-admissible measure satisfies the assumptions. 

Observe that the geometric condition is only assumed to hold for the specific
$r$ and $R$ under consideration, but whenever the geometric condition is
satisfied the implicit constants in the estimate are independent of $r$ and $R$.
Clearly, the constant 2 in $B(x,2\de)$ is not important and can be replaced
by any number $\ge 2$. 
This may be useful for some spaces containing well distributed holes.
The same is true for Corollary~\ref{cor-Jana-2} below.

That the geometric assumption cannot be dropped is shown by 
Example~\ref{ex-starving-snake}, while
Example~\ref{ex-weighted-bow-tie} shows 
that the Poincar\'e assumption cannot be 
weakened.
Examples~\ref{ex-weighted-1D} and~\ref{ex-weighted-1D-inc}
show that the assumption that $\mu$ is globally doubling can neither be replaced by 
the assumption that $\mu$ is doubling at $x_0$, nor by the assumption
that $\mu$ is globally reverse-doubling, i.e.\ reverse doubling at every
$x \in X$ with uniform constants.

\begin{proof}
The upper bound follows from Lemma~\ref{lem-upper-simple}, 
so it suffices to prove the lower bound.

Use the Hausdorff maximality principle to find a maximal pairwise disjoint 
collection of balls $B(x_j,\de)$ with $x_j\in B_R\setm B_r$.
By maximality, the balls $B_j=B(x_j,2\de)$ cover $B_R\setm B_r$.
Moreover, since $\mu$ is globally
doubling, it can be shown that the balls $\la B_j$
have bounded overlap depending only on $\la$ and the doubling constant of $\mu$.
Now, for each $j$ let $B'_j=B(x_j',a\de)$ and $B''_j=B(x_j'',a\de)$ 
as in  the assumption of the theorem.

Let $u$ be admissible for 
$\cp(B_r,B_R)$.
Then  $u=1$ in $B_r$ and $u=0$ outside $B_R$.
In particular, $u=1$ in each $B'_j$ and $u=0$ in each $B''_j$.
Since $\mu$ is globally doubling, it follows that 
$u_{B_j} \le 1-\mu(B''_j)/\mu(B_j)\le c$, where $c<1$ is independent of $j$.
An application of the global \p-Poincar\'e inequality to $B_j$,
and using that $g_u=0$ a.e.\ outside $B_R \setm B_r$, then yields

\begin{align*}
0<1-c & \le \vint_{B'_j} |u-u_{B_j}| \,d\mu \simle \vint_{B_j} |u-u_{B_j}| \,d\mu 
  \\
   & \simle \de \biggl(\frac{1}{\mu(\la B_j)} \int_{\la B_j} g_u^p \,d\mu \biggr)^{1/p} 
   =  \de \biggl(\frac{1}{\mu(\la B_j)} \int_{\la B_j\cap(B_R\setm B_r)} 
    g_u^p \,d\mu \biggr)^{1/p}. \notag
\end{align*} 
From this it follows that 
\[ 
\mu(\la B_j\cap(B_R\setm B_r)) \le
\mu(\la B_j) \simle \de^p \int_{\la B_j\cap(B_R\setm B_r)} g_u^p \,d\mu.
\] 
Since the balls $\la B_j$ have bounded overlap and
cover $B_R \setm B_r$, summing over all $j$ gives
\[
\mu(B_R\setm B_r) \simle \de^p \int_{B_R\setm B_r} g_u^p \,d\mu
\]
and taking infimum over all admissible $u$ 
proves the lower bound. 
\end{proof}

The following corollary partly complements Theorem~\ref{thm-PI-and-AD}
and the lower bound in Theorem~\ref{thm-the-nice-case}
in the case when the $1$-AD property or
a $1$-Poincar\'e inequality are not satisfied.
In particular, if the doubling condition 
and a $1$-Poincar\'e inequality hold globally and $p>1$, then the $1$-AD
condition can be replaced by the geometric condition in 
Theorem~\ref{thm-annular-PI}.
That the geometric assumption cannot be dropped is shown by 
Example~\ref{ex-starving-snake},
while 
Example~\ref{ex-weighted-bow-tie} shows 
that the pointwise Poincar\'e assumption cannot be weakened.

\begin{cor} \label{cor-Jana-2}
If the assumptions of Theorem~\ref{thm-annular-PI} are satisfied
and in addition $X$ supports a $q$-Poincar\'e inequality at $x_0$ for some\/
$1 \le q <p$,
 then
\[
\cp(B_r,B_R) \simge
 \Bigl(1-\frac {r}R \Bigr)^{q-p}\frac{\mu(B_R)}{R^p}.
\]
\end{cor}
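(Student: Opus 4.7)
My plan is to combine two lower bounds for $\cp(B_r,B_R)$ via a weighted geometric mean, arranged so that the a~priori uncontrolled quantity $\mu(B_R\setm B_r)$ cancels out. The first bound is supplied directly by Theorem~\ref{thm-annular-PI}, whose hypotheses are assumed:
\[
\cp(B_r,B_R) \simge \frac{\mu(B_R\setm B_r)}{(R-r)^p},
\]
in which $\mu(B_R\setm B_r)$ appears in the numerator.

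For the second bound I would rerun the opening of the proof of Theorem~\ref{thm-PI-and-AD} (up to, but not including, the point where the $\eta$-AD property is used), now with the \p-Poincar\'e inequality there replaced by the $q$-Poincar\'e inequality at $x_0$. The global doubling and global \p-Poincar\'e hypotheses force $X$ to be connected (Proposition~\ref{prop-connected}), so $\mu$ is reverse-doubling at $x_0$ with arbitrary $\tau>1$. Thus for any $u$ admissible for $\cp(B_r,B_R)$, $|u_{B_{\tau R}}|<c<1$, while $u\equiv 1$ on $B_{R/2}\subseteq B_r$, and the $q$-Poincar\'e inequality at $x_0$ together with H\"older's inequality (using that $g_u$ is supported in $B_R\setm B_r$) gives
\[
1 \simle \frac{R}{\mu(B_R)^{1/q}}\,\mu(B_R\setm B_r)^{1/q-1/p}\biggl(\int_{B_R\setm B_r} g_u^p\,d\mu\biggr)^{1/p}.
\]
Taking infimum over admissible $u$ and raising to the $p$-th power yields
\[
\cp(B_r,B_R) \simge \frac{\mu(B_R)^{p/q}}{R^p\,\mu(B_R\setm B_r)^{p/q-1}},
\]
in which, since $p/q-1>0$, $\mu(B_R\setm B_r)$ appears effectively in the denominator.

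Now I would interpolate between the two bounds by raising the first to the power $a=(p-q)/p$, the second to the power $b=q/p$, and multiplying; since $a+b=1$ the product is still a valid lower bound for $\cp(B_r,B_R)$. A short exponent check shows that the power of $\mu(B_R\setm B_r)$ in the resulting right-hand side is $a\cdot 1 + b\cdot(1-p/q)=0$, so this quantity drops out; and the exponents of the surviving factors combine to yield
\[
\cp(B_r,B_R)\simge \mu(B_R)\,(R-r)^{q-p}\,R^{-q} = \Bigl(1-\frac{r}{R}\Bigr)^{q-p}\frac{\mu(B_R)}{R^p},
\]
which is the desired conclusion.

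The main point driving the proof is that H\"older's inequality, used with $q<p$, converts the $q$-Poincar\'e estimate into a bound whose $\mu(B_R\setm B_r)$-dependence has the \emph{opposite} exponent sign to that in Theorem~\ref{thm-annular-PI}; this is precisely what allows a weighted geometric-mean interpolation to eliminate the uncontrolled annular measure. I do not anticipate any real technical obstacles: the auxiliary bound is a direct truncation of an already-used argument, and the final interpolation step is purely algebraic.
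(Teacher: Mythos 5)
Your argument is correct, but it takes a structurally different route from the paper's. The paper observes that the $q$-Poincar\'e inequality at $x_0$ (applied to the explicit piecewise-linear cutoff, whose upper gradient is bounded by $(R-r)^{-1}\chi_{B_R\setminus B_r}$) yields a purely measure-theoretic estimate, isolated as Lemma~\ref{lem-reverse-decay-global}: $\mu(B_R\setminus B_r)\simge (1-r/R)^q\mu(B_R)$ for $\tfrac12 R\le r<R<\diam X/2\tau$. Substituting this directly into the two-sided estimate $\cp(B_r,B_R)\simeq \mu(B_R\setminus B_r)/(R-r)^p$ of Theorem~\ref{thm-annular-PI} gives the corollary in one line. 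You instead keep the Poincar\'e inequality applied to a \emph{general} admissible $u$, split via H\"older to get a second capacity lower bound in which $\mu(B_R\setminus B_r)$ appears with the opposite exponent sign, and then eliminate the uncontrolled annular measure by a weighted geometric-mean interpolation of the two capacity bounds. Both arguments feed the same Poincar\'e input into the same Theorem~\ref{thm-annular-PI} bound, and a short computation shows they produce identical exponents, so neither is sharper; but the paper's factoring-out of the measure lemma is slightly more economical and gives a reusable intermediate result (Lemma~\ref{lem-reverse-decay-global} is invoked again in the proof of Theorem~\ref{thm-1-AD-global} and of Proposition~\ref{prop-Jana-3}), whereas your interpolation is self-contained and makes the cancellation mechanism explicit. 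One small caveat worth flagging in either approach: the reverse-doubling needed for $|u_{B_{\tau R}}|<c<1$ (equivalently, for Lemma~\ref{lem-reverse-decay-global}) imposes the implicit constraint $R\le\diam X/2\tau$, which the corollary's statement leaves tacit; your remark that $\tau>1$ may be chosen arbitrarily under the global hypotheses correctly addresses this.
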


\begin{proof}
This follows directly from Theorem~\ref{thm-annular-PI}
and the following Lemma~\ref{lem-reverse-decay-global}. 
\end{proof}

The following estimate complements the $1$-AD property. 
In particular, if $q=1$ then 
this lower bound, together with the $1$-AD property, 
leads to a sharp two-sided estimate
for $\mu(B_{R}\setm B_r)$ when $r$ is close to $R$.

\begin{lem} \label{lem-reverse-decay-global}
Assume that $X$ supports a $q$-Poincar\'e inequality at $x_0$
for some\/ $1 \le q < \infty$ and that 
$\mu$ is doubling at $x_0$ and
reverse-doubling at $x_0$.
Then 
\[ 
\mu(B_R\setminus B_r)\simge \Bigl(1-\frac r R\Bigr)^q\mu(B_R)
\quad \text{when\/ }0<\frac{R}{2}\le r < R< \frac{\diam X}{2\tau}.
\] 
\end{lem}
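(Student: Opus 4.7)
The plan is to test the $q$-Poincar\'e inequality against the Lipschitz cutoff
\[
u(x)=\min\Bigl\{1,\frac{\dist(x,B_r)}{R-r}\Bigr\},
\]
which vanishes on $B_r$, equals $1$ outside $B_R$, and admits $g=(R-r)^{-1}\chi_{B_R\setm B_r}$ as an upper gradient. The idea is to show that the two averages $u_{B_r}$ and $u_{B_{\tau R}}$ differ by a definite amount, and then let the $q$-Poincar\'e inequality on $B_{\tau R}$ translate this gap into a lower bound on $\int g^q\,d\mu$, hence on $\mu(B_R\setm B_r)$.

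First I would combine reverse-doubling at $x_0$, which yields $\mu(B_{\tau R}\setm B_R)\simge \mu(B_{\tau R})$, with doubling at $x_0$ iterated a bounded number of times (controlled by $\tau$ and $\la$) to obtain
\[
\mu(B_r)\simeq\mu(B_R)\simeq\mu(B_{\tau R})\simeq\mu(B_{\la\tau R}),
\]
where the first comparability uses the hypothesis $R/2\le r$. Since $u\equiv 1$ on $B_{\tau R}\setm B_R$, this gives a positive constant $c$, depending only on the doubling and reverse-doubling constants, with $u_{B_{\tau R}}\ge c$, while clearly $u_{B_r}=0$.

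Next, since $B_r\subset B_{\tau R}$ with comparable measures, the triangle inequality for averages yields
\[
c\le|u_{B_r}-u_{B_{\tau R}}|\simle\vint_{B_{\tau R}}|u-u_{B_{\tau R}}|\,d\mu.
\]
Applying the $q$-Poincar\'e inequality at $x_0$ to the ball $B_{\tau R}$, computing $\int_{B_{\la\tau R}}g^q\,d\mu=\mu(B_R\setm B_r)/(R-r)^q$, and using $\mu(B_{\la\tau R})\simeq\mu(B_R)$ leads to
\[
1\simle\frac{R}{R-r}\biggl(\frac{\mu(B_R\setm B_r)}{\mu(B_R)}\biggr)^{1/q},
\]
which rearranges into the claimed estimate.

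The whole argument is essentially a single application of the $q$-Poincar\'e inequality. The only book-keeping step is the chain of doubling and reverse-doubling comparisons, but since $\tau$, $\la$ and $\ga$ are all fixed by the hypotheses, these comparisons are all absorbed into the implicit constants, and I foresee no serious obstacle.
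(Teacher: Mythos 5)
Your proof is correct and follows essentially the same approach as the paper: test the $q$-Poincar\'e inequality on a Lipschitz cutoff supported in the annulus, use reverse-doubling to bound the oscillation of its mean value away from zero, and combine with doubling to compare the measures of the relevant balls. The only cosmetic difference is that you use $u=1-$(the paper's test function) and bound $|u_{B_r}-u_{B_{\tau R}}|$ rather than $\vint_{B_{R/2}}|u-u_{B_{\tau R}}|\,d\mu$, which is an equivalent way to quantify the same oscillation.
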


Example~\ref{ex-weighted-bow-tie} shows that
the Poincar\'e assumption cannot be weakened,
while Example~\ref{ex-weighted-1D} shows that
the reverse-doubling condition cannot be omitted.
We do not know if the doubling condition can be omitted.

\begin{proof}
Let
\[
   u(x)=\biggl(1-\frac{\dist(x,B_r)}{R-r}\biggr)_\limplus.
\]
As in the proof of Lemma~\ref{lem-upper-simple},
we obtain that
\[
\int_{X} g_{u}^q \,d\mu  \le \frac{\mu(B_{R}\setminus B_r)}{(R-r)^{q}}. 
\]
On the other hand, 
as in the proof of Theorem~\ref{thm-PI-and-AD},
we get that 
\begin{align*}
1 \simle R\biggl(\vint_{B_{\tau\lambda  R}} g_{u}^q \,d\mu \biggr)^{1/q}
\simle \frac{R}{\mu(B_R)^{1/q}} \frac{\mu(B_R\setminus B_r)^{1/q}}{(R-r)},
\end{align*} 
and the claim follows.
\end{proof}

We can also obtain the following variant of Corollary~\ref{cor-Jana-2}.

\begin{prop} \label{prop-Jana-3}
If the assumptions of Theorem~\ref{thm-annular-PI} are satisfied
with $p>1$, then there is\/ $1 \le q<p$ such that 
\[
\cp(B_r,B_R) \simge
 \Bigl(1-\frac {r}R \Bigr)^{q-p}\frac{\mu(B_R)}{R^p}.
\]
\end{prop}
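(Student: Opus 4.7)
The plan is to reduce Proposition~\ref{prop-Jana-3} to Corollary~\ref{cor-Jana-2} via the Keith--Zhong self-improvement of the Poincar\'e inequality. Indeed, the only ingredient in Corollary~\ref{cor-Jana-2} that is not already among the hypotheses of Theorem~\ref{thm-annular-PI} is a $q$-Poincar\'e inequality at $x_0$ for some exponent $q\in[1,p)$, and producing such a $q$ from a \p-Poincar\'e inequality is exactly the content of the Keith--Zhong theorem.

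More precisely, the standing hypotheses of Theorem~\ref{thm-annular-PI} give that $\mu$ is globally doubling and that $X$ supports a global \p-Poincar\'e inequality, and we additionally assume $p>1$. Hence the Keith--Zhong theorem produces an exponent $q\in[1,p)$ (depending only on the doubling constant and on the constants in the \p-Poincar\'e inequality) for which $X$ supports a global $q$-Poincar\'e inequality. In particular, $X$ supports a $q$-Poincar\'e inequality at $x_0$, so all the assumptions of Corollary~\ref{cor-Jana-2} are met. Applying that corollary yields
\[
\cp(B_r,B_R) \simge \Bigl(1-\frac{r}{R}\Bigr)^{q-p}\frac{\mu(B_R)}{R^p},
\]
which is precisely the claim. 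Note that since $q-p<0$ and $1-r/R\in(0,1]$, this is genuinely stronger than the trivial lower bound obtained for the limiting exponent $q=p$.

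The subtle point is the applicability of Keith--Zhong, whose standard formulation assumes $X$ to be complete (or proper), whereas completeness is not imposed among the standing hypotheses of the paper. The usual remedy is to pass to the metric completion $\Xhat$, extend $\mu$ to $\Xhat$ (the global doubling of $\mu$ and the density of $X$ in $\Xhat$ ensure that both the doubling constant and the \p-Poincar\'e constants are preserved), invoke Keith--Zhong on $\Xhat$, and then restrict the resulting global $q$-Poincar\'e inequality back to $X$. Since the constants in the output depend only on the doubling and Poincar\'e data, the implicit constant in the final estimate is uniform for all $r$ and $R$ in the admissible range, exactly as required.
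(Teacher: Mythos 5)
Your reduction to Corollary~\ref{cor-Jana-2} has a genuine gap at the last step: you cannot, in general, ``restrict the resulting global $q$-Poincar\'e inequality back to $X$.'' Passing to the completion $\Xhat$ and using Aikawa--Shanmugalingam and Keith--Zhong to obtain a global $q$-Poincar\'e inequality on $\Xhat$ is correct (and is exactly what the paper does), but a Poincar\'e inequality on $\Xhat$ does \emph{not} descend to $X$. The issue is that a pair $(f,g)$ with $g$ an upper gradient of $f$ on $X$ need not have $g$ as an upper gradient of $f$ on $\Xhat$: removing the set $\Xhat\setminus X$ removes curves, so the upper-gradient condition on $X$ is strictly weaker, and the $\Xhat$-Poincar\'e inequality simply does not apply to such pairs. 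Indeed, the paper explicitly points to Koskela's counterexamples (spaces of the form $X=\R^n\setminus E$ with $E\subset\R^{n-1}$) where $X$ supports a $p$-Poincar\'e inequality but no $q$-Poincar\'e inequality for any $q<p$, even though $\Xhat$ supports a global $1$-Poincar\'e inequality. Thus Corollary~\ref{cor-Jana-2} is genuinely not available here.

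The paper's workaround is to apply not Corollary~\ref{cor-Jana-2} but Lemma~\ref{lem-reverse-decay-global} \emph{in $\Xhat$}, using the $q$-Poincar\'e inequality obtained there. That lemma is proved with a specific Lipschitz test function, and its conclusion is a pure measure estimate
\[
\mu(B_R\setminus B_r)\simge \Bigl(1-\frac rR\Bigr)^q\mu(B_R),
\]
which, since $\mu(\Xhat\setminus X)=0$ and the balls in $X$ and $\Xhat$ have the same $\mu$-measure, transfers back to $X$ verbatim. Combining this with the two-sided estimate $\cp(B_r,B_R)\simeq \mu(B_R\setminus B_r)/(R-r)^p$ from Theorem~\ref{thm-annular-PI} (valid in $X$) then gives the claimed lower bound. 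To repair your argument, replace the ``restrict the Poincar\'e inequality back to $X$'' step with this transfer of the measure estimate.
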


As seen from the proof below (and those in \cite{AikSh05} and \cite{KeZh})
$q$ only depends on $p$ and the constants in the global doubling
condition and the global \p-Poincar\'e inequality.
Moreover, it also follows from the proof that the completion
$\Xhat$ of $X$ supports a global $q$-Poincar\'e inequality for this $q$.
In fact, it would be enough to require that
$\Xhat$ supports a $q$-Poincar\'e inequality at $x_0$.
Note that Koskela~\cite[Theorems~A and~C]{Koskela} 
has given counterexamples showing
that $X$ may not support any better Poincar\'e inequality
than the \p-Poincar\'e inequality
(so Corollary~\ref{cor-Jana-2} is not at our disposal).
His examples are of the type $X=\R^n \setm E$, where $E \subset \R^{n-1}$
so they satisfy the geometric condition in 
Theorem~\ref{thm-annular-PI} and $\Xhat$ supports a 
global $1$-Poincar\'e
inequality.

\begin{proof}
Let $\Xhat$ be the completion of $X$ and extend the measure
$\mu$ so that $\mu(\Xhat \setm X)=0$.
Then $\mu$ is doubling on $\Xhat$ and $\Xhat$ supports a \p-Poincar\'e
inequality, by Proposition~7.1 in Aikawa--Shanmugalingam~\cite{AikSh05}.
By Theorem~1.0.1 in Keith--Zhong~\cite{KeZh}, it 
follows that there is $1\le q<p$ such that $\Xhat$ supports a
$q$-Poincar\'e inequality.
Now we can apply 
Lemma~\ref{lem-reverse-decay-global} with respect to $\Xhat$, 
and since the estimate of Lemma~\ref{lem-reverse-decay-global} 
holds for the measure $\mu$ also when restricted to $X$, this estimate
together with Theorem~\ref{thm-annular-PI}
completes the proof.
\end{proof}

\section{The blowup of \texorpdfstring{$\cp(B_r,B_R)$}{cap(Br,BR)}
as \texorpdfstring{$r \to R$}{r-->R}}
\label{sect-infty}

Theorem~\ref{thm-PI-and-AD}, Corollary~\ref{cor-Jana-2} and
Proposition~\ref{prop-Jana-3} all give uniform
estimates for the blowup of $\cp(B_r,B_R)$ as $r \to R$.
In particular they show that 
\[
      \lim_{\de \to 0\limplus} \cp(B_{R-\de}, B_R) =
      \lim_{\de \to 0\limplus} \cp(B_R, B_{R+\de}) = \infty
\]
when the respective assumptions are satisfied.

If we are not interested in uniform estimates, but only in the limits
above, then these can be obtained under considerably weaker assumptions,
as we will now show.

\begin{prop} \label{prop-infty}
Assume that\/ $1\le q<p<\infty$ and that 
$X$ supports a $q$-Poincar\'e inequality at $x_0$.
Let $R>0$ be such that $\mu(X \setm B_R)>0$,
which in particular holds if $X \setm \itoverline{B}_R \ne \emptyset$.
Then
\begin{equation} \label{eq-infty}
\lim_{\de \to 0 \limplus}  \cp(B_{R-\de},B_R)=\infty.
\end{equation}
If in addition $\mu(\{y:d(y,x_0)=R\})=0$,
then also 
\begin{equation} \label{eq-infty-outer}
\lim_{\de \to 0 \limplus}  \cp(B_{R},B_{R+\de})=\infty.
\end{equation}
\end{prop}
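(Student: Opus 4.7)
The plan is a standard two-step chain on a fixed outer ball: use the $q$-Poincar\'e inequality to produce a $\delta$-uniform positive lower bound on $\int g_u^q\,d\mu$, and then use H\"older on the shrinking support of $g_u$ to convert that, via $q<p$, into an unbounded lower bound on $\int g_u^p\,d\mu$.

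For \eqref{eq-infty}: since $\mu(X\setm B_R)>0$, continuity of measure from below lets us fix once and for all a radius $R'>R$ with $\mu(B_{R'})>\mu(B_R)$; set $c=\mu(B_R)/\mu(B_{R'})<1$. Let $u$ be admissible for $\cp(B_{R-\delta},B_R)$, truncated so that $0\le u\le 1$. Then $u\equiv 1$ on $B_{R-\delta}$, $u\equiv 0$ off $B_R$, $g_u$ vanishes $\mu$-a.e.\ outside $B_R\setm B_{R-\delta}$, and $u_{B_{R'}}\le c$, so $|u-u_{B_{R'}}|\ge 1-c$ on $B_{R-\delta}$. Applying the $q$-Poincar\'e inequality at $x_0$ to $B_{R'}$, together with $\mu(B_{R-\delta})\ge \tfrac12\mu(B_R)$ for small $\delta$, yields $\int g_u^q\,d\mu\ge c_1>0$, with $c_1$ independent of $\delta$ and $u$. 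H\"older's inequality restricted to the support of $g_u$ then gives
\[
c_1\le \Bigl(\int g_u^p\,d\mu\Bigr)^{q/p}\mu(B_R\setm B_{R-\delta})^{1-q/p},
\]
and since $p>q$ and $\mu(B_R\setm B_{R-\delta})\to 0$ as $\delta\to 0^+$ (by $B_{R-\delta}\nearrow B_R$), rearranging and taking infimum over $u$ proves \eqref{eq-infty}.

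For \eqref{eq-infty-outer} the argument is symmetric, using the same $R'$. The extra hypothesis $\mu(\{y:d(y,x_0)=R\})=0$ plays a double role: it gives $\mu(\{y:d(y,x_0)\le R\})=\mu(B_R)$, so that $u_{B_{R'}}\le\mu(B_{R+\delta})/\mu(B_{R'})\le c'<1$ uniformly for small $\delta$; and it ensures $\mu(B_{R+\delta}\setm B_R)\to 0$ as $\delta\to 0^+$ by continuity of measure from above on $B_{R+\delta}\searrow\{y:d(y,x_0)\le R\}$. The role of $B_{R-\delta}$ is now played by $B_R$ (on which $u\equiv 1$), $g_u$ is supported in $B_{R+\delta}\setm B_R$ up to a null set, and the Poincar\'e--H\"older chain runs verbatim.

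The only genuinely delicate aspect is uniformity in $\delta$: the radius $R'$ and the constants $c$ or $c'$ must be chosen once and remain bounded away from $1$ as $\delta\to 0^+$. The sphere-measure hypothesis in the outer case is precisely what excludes a ``spherical atom'' which could otherwise both push $u_{B_{R'}}$ toward $1$ and prevent the annulus measure $\mu(B_{R+\delta}\setm B_R)$ from vanishing.
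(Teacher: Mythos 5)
Your proof is correct and follows essentially the same route as the paper: fix an outer ball centred at $x_0$, apply the $q$-Poincar\'e inequality there to get a $\delta$-independent lower bound on the $L^q$-norm of $g_u$ over the shrinking annulus, then use H\"older together with $q<p$ and $\mu(B_R\setm B_{R-\de})\to 0$ (resp.\ $\mu(B_{R+\de}\setm B_R)\to 0$) to force the $p$-energy to blow up. The one small difference is that you choose $R'$ large enough that $\mu(B_{R'})>\mu(B_R)$ directly from $\mu(X\setm B_R)>0$ and continuity from below, whereas the paper fixes the outer ball at $B_{2R}$ and invokes connectedness (Proposition~\ref{prop-connected}) to verify $\mu(B_{2R})>\mu(B_R)$; your choice sidesteps that extra observation.
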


If $X=B_R$ then $\cp(B_{R-\de},B_R)=\cp(B_{R},B_{R+\de})=0$, and thus the condition
$\mu(X \setm B_R)>0$ cannot be dropped for either limit.
Example~\ref{ex-weighted-bow-tie} shows 
that it is not enough to assume that $X$ supports
global $q$-Poincar\'e inequalities for all $q>p$ for neither
limit,
but we do not know if it is enough to assume
that $X$ supports a \p-Poincar\'e inequality
at $x_0$.
Moreover, Example~\ref{ex-starving-snake}  shows that
the assumption $\mu(\{y : d(y,x_0)=R\})=0$ cannot be dropped
for the limit \eqref{eq-infty-outer} 
to hold, even if 
$X$ supports a global $1$-Poincar\'e inequality.
If $p=1$ the result fails even if we assume
a global $1$-Poincar\'e inequality, as seen by considering
$\R^n$ or Theorem~\ref{thm-the-nice-case-intro}.

\begin{proof}
Assume that $0 <\de < \frac{1}{2}R$
and let $r=R-\de$.
Let $u$ be admissible for $\cp(B_r,B_R)$.
Then,
following the ideas in
the proof of  Theorem~\ref{thm-PI-and-AD},
\begin{align*}
 1 - \frac{\mu(B_R)}{\mu(B_{2R})} 
& \le \vint_{B_{R/2}}|u-u_{B_{2 R}}|\,d\mu \\
&   \le \frac{\mu(B_{2R})}{\mu(B_{R/2})} \vint_{B_{2 R}}|u-u_{B_{2R}}|\,d\mu \\ 
& \le CR \frac{\mu(B_{2R})}{\mu(B_{R/2})}
        \biggl(\vint_{B_{2\lambda  R}} g_{u}^q \,d\mu \biggr)^{1/q}\\
& =   \frac{CR\mu(B_{2R})}{\mu(B_{R/2})\mu(B_{2\la R})^{1/q}}
     \biggl(\int_{B_{R}\setminus B_r} g_{u}^q \,d\mu \biggr)^{1/q} \\
& \le \frac{CR\mu(B_{2R})}{\mu(B_{R/2})\mu(B_{2\la R})^{1/q}}
    \mu(B_{R}\setminus B_r)^{1/q-1/p}
    \biggl(\int_{B_{R}\setminus B_r} g_{u}^p \,d\mu \biggr)^{1/p}.
\end{align*} 
Taking infimum over all
admissible $u$ 
shows that
\begin{equation} \label{eq-infty-2}
  \cp(B_r,B_R) 
    \ge \biggl(1 - \frac{\mu(B_R)}{\mu(B_{2R})}\biggr)^p
    \biggl(\frac{\mu(B_{R/2})\mu(B_{2\la R})^{1/q}}{CR\mu(B_{2R})}\biggr)^p
    \mu(B_{R}\setminus B_r)^{1-p/q}.
\end{equation}
To see that the first factor in the right-hand side is positive, 
we note that 
either $X=\{y:d(y,x_0) \le R\}$
or
there is a point $y$ with $R < d(y,x_0)< \tfrac{3}{2} R$,
as $X$ is connected by Proposition~\ref{prop-connected}.
In the former case, $\mu(B_{2R} \setm B_R)=\mu(X \setm B_R)>0$,
while in the letter case 
$\mu(B_{2R} \setm B_R) \ge \mu(B(y,d(y,x_0)-R)) > 0$.
Thus the first factor in \eqref{eq-infty-2} is positive,
and so is clearly the second one as well.
Since the last factor tends to $\infty$, as $\de \to 0\limplus$,
we see that \eqref{eq-infty} holds.

The proof of \eqref{eq-infty-outer} is similar
(one can also use \eqref{eq-infty-2} directly), but
in this case one needs to use that 
$\mu(B_{R+\de} \setm B_R) \to \mu(\{y:d(y,x_0)=R\})=0$, as $\de \to 0\limplus$.
\end{proof}

\section{Characterizations of the 1-AD property}
\label{sect-char-1-annular}

Our aim in this section is to characterize the $1$-AD property.

\begin{thm} \label{thm-1-AD-global}
Let $f(r):=\mu(B_r)$.
Then the following are equivalent\/\textup{:}
\begin{enumerate}
\item \label{g-annular}
$\mu$ has the\/ $1$-AD property at $x_0$\textup{;}
\item \label{g-ac}
$f$ is locally absolutely continuous on\/ $(0,\infty)$ and 
$\rho f'(\rho) \simle f(\rho)$ for a.e.\ $\rho>0$\textup{;}
\item \label{g-lip}
\setcounter{saveenumi}{\value{enumi}}
$f$ is locally Lipschitz on\/ $(0,\infty)$ and 
$\rho f'(\rho) \simle f(\rho)$ for a.e.\ $\rho>0$.
\end{enumerate}

If moreover
$\mu$ is 
reverse-doubling at $x_0$ and $X$ supports a\/ $1$-Poincar\'e inequality at $x_0$,
then also
the following condition is equivalent to those  above\/\textup{:}
\begin{enumerate}
\setcounter{enumi}{\value{saveenumi}}
\item \label{g-extra}
$f$ is locally Lipschitz on\/ $(0,\infty)$ and 
$\rho f'(\rho) \simeq f(\rho)$ for a.e.\ $\rho$ with\/ 
$0< \rho < \diam X/2\tau$.
\end{enumerate}
\end{thm}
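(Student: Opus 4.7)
The plan is to prove the unconditional three-way equivalence via the cycle $(a)\Rightarrow(c)\Rightarrow(b)\Rightarrow(a)$, and then under the extra hypotheses to prove $(c)\Leftrightarrow(d)$. The implications $(c)\Rightarrow(b)$ and $(d)\Rightarrow(c)$ are immediate, since locally Lipschitz implies locally absolutely continuous and $\simeq$ trivially implies $\simle$.

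For $(a)\Rightarrow(c)$, since $B_r$ and $B_R\setm B_r$ are disjoint with union $B_R$, the $1$-AD property reads $f(R)-f(r)\le C(1-r/R)f(R)$ for all $0<r<R$. On any compact subinterval $[a,b]\subset(0,\infty)$ this gives the Lipschitz bound $f(R)-f(r)\le Cf(b)(R-r)/a$ whenever $a\le r<R\le b$, so $f$ is locally Lipschitz on $(0,\infty)$. At any point $\rho$ of differentiability (a.e.\ by the Lipschitz property), taking $r=\rho$, $R=\rho+h$ and letting $h\to 0\limplus$ yields $\rho f'(\rho)\le Cf(\rho)$.

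For $(b)\Rightarrow(a)$, absolute continuity yields $f(R)-f(r)=\int_r^R f'(\rho)\,d\rho \le C\int_r^R f(\rho)/\rho\,d\rho$. I would split into two cases: when $r\ge R/2$, monotonicity $f(\rho)\le f(R)$ together with $\rho\ge R/2$ bounds the integral by $2C(R-r)f(R)/R = 2C(1-r/R)\mu(B_R)$; when $r<R/2$ the chain $\mu(B_R\setm B_r)\le \mu(B_R)\le 2(1-r/R)\mu(B_R)$ is trivial. This case split is the only genuinely subtle point in the equivalence of $(a)$--$(c)$, and is the main obstacle: naively one only has $\int_r^R d\rho/\rho=\log(R/r)$, which matches $(R-r)/R$ only in the thin-annulus regime $r\ge R/2$.

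Finally, for $(c)\Rightarrow(d)$ under the extra hypotheses, the upper bound $\rho f'(\rho)\simle f(\rho)$ is part of $(c)$. For the lower bound I would invoke Lemma~\ref{lem-reverse-decay-global} with $q=1$; its doubling hypothesis at $x_0$ follows from $(a)$, and its remaining hypotheses are exactly the extra assumptions. That lemma supplies $\mu(B_R\setm B_r)\simge (1-r/R)\mu(B_R)$ for $R/2\le r<R<\diam X/2\tau$, and setting $r=\rho$, $R=\rho+h\to\rho\limplus$ at a point of differentiability yields $\rho f'(\rho)\simge f(\rho)$ a.e.\ in the required range, completing the proof.
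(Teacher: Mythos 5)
Your proof of the three-way equivalence $(a)\Leftrightarrow(b)\Leftrightarrow(c)$ is correct, and your argument for $(b)\Rightarrow(a)$ is a genuinely different route from the paper's. The paper passes to $h=\log f$, integrates $h'(\rho)\le M/\rho$ to obtain $f(r)/f(R)\ge (r/R)^M$, and then cites an external lemma to bound $1-(1-t)^M\le\max\{1,M\}\,t$. Your case split (thin annulus $r\ge R/2$ via $f(\rho)/\rho\le 2f(R)/R$ on $[r,R]$; nonthin annulus $r<R/2$ handled trivially since then $1-r/R>\tfrac12$) is more elementary and self-contained and does the job equally well. Your implementation of $(c)\Rightarrow(d)$ via Lemma~\ref{lem-reverse-decay-global} with $q=1$ matches the paper exactly.

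However, your claim that $(d)\Rightarrow(c)$ is ``immediate'' has a genuine gap when $X$ is bounded. Condition~(d) asserts $\rho f'(\rho)\simeq f(\rho)$ only for a.e.\ $\rho$ with $0<\rho<\diam X/2\tau$, whereas (c) demands the upper bound $\rho f'(\rho)\simle f(\rho)$ for a.e.\ $\rho>0$. When $X$ is bounded, the interval $[\diam X/2\tau,\infty)$ is not covered by (d), and it is not true that ``$\simeq$ implies $\simle$'' settles it, since there is nothing to imply from on that range. The missing argument is short but must be supplied: for $\rho\ge\diam X$ one has $f(\rho)=\mu(X)$ constant, so $f'(\rho)=0$ and the bound is trivial; on the compact interval $[\diam X/2\tau,\diam X]$, the local Lipschitz assumption in (d) gives $f'(\rho)\le M$ a.e.\ for some $M$, while $f(\rho)/\rho\ge f(\diam X/2\tau)/\diam X>0$, so $\rho f'(\rho)\le M\diam X\simle f(\rho)$ with a constant depending on $M$, $\tau$ and $f(\diam X/2\tau)$. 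This is essentially what the paper does (it writes $\tfrac13\diam X$ rather than $\diam X/2\tau$, a harmless slip when $\tau\le\tfrac32$ but one you should avoid). You should add this bounded-space argument to close the cycle.
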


The assumption of absolute continuity cannot be dropped,
as shown by Example~2.6 in Bj\"orn--Bj\"orn--Lehrb\"ack~\cite{BBL}
where $X$ is the usual Cantor ternary set and $f$ is the Cantor
staircase function for which $f'(\rho) = 0 \le f(\rho)/\rho$
for a.e.\ $\rho >0$.

Example~\ref{ex-weighted-bow-tie} shows that 
the $1$-Poincar\'e assumption (for the last part)
 cannot be weakened, even under
global assumptions,
while
Example~\ref{ex-weighted-1D} shows that the reverse-doubling
assumption  cannot be dropped
even if $X$ supports a global $1$-Poincar\'e inequality.

\begin{proof}
\ref{g-annular} $\imp$ \ref{g-lip}
By the $1$-AD property at $x_0$  we have for $0 < \eps  < \rho$ that
\begin{equation} \label{eq-f}
    \frac{f(\rho)-f(\rho-\eps)}{\eps} 
    \simle \frac{\biggl(1-\displaystyle\frac{\rho-\eps}{\rho}\biggr) f(\rho)}{\eps}
    = \frac{f(\rho)}{\rho}.
\end{equation}
Since the right-hand side is locally bounded it follows
that $f$ is locally Lipschitz on $(0,\infty)$,
and thus that $f'(\rho)$ exists for a.e.\ $\rho>0$.
Moreover, by \eqref{eq-f} we see that $f'(\rho) \simle f(\rho)/\rho$
whenever $f'(\rho)$ exists.

\ref{g-lip} $\imp$ \ref{g-ac} This is trivial.

\ref{g-ac} $\imp$ \ref{g-annular}
Assume that $\rho f'(\rho)/f(\rho) \le M$ a.e.
We have
\begin{equation}  \label{eq-est-annular}
\frac{\mu(B_R\setm B_r)}{\mu(B_R)} = 1 -\frac{f(r)}{f(R)} 
= 1- \exp (h(r)-h(R)),
\end{equation}
where $h(\rho)=\log f(\rho)$ is also locally absolutely continuous with
$h'(\rho) = f'(\rho)/f(\rho) \le M/\rho$ for a.e.\ $\rho>0$.
It follows that
\[
h(r) - h(R) = - \int_r^R h'(\rho)\,d\rho \ge -M \int_r^R \frac{d\rho}{\rho}
= \log \Bigl( \frac{r}{R} \Bigr)^M.
\]
Inserting this into~\eqref{eq-est-annular} yields
\[
 1 -\frac{f(r)}{f(R)} 
\le 1 - \Bigl( \frac{r}{R} \Bigr)^M.
\]
Finally, Lemma~3.1 from 
Bj\"orn--Bj\"orn--Gill--Shanmugalingam~\cite{tree} shows that for $t\in[0,1]$,
\[
\min\{1,M\} t \le 1-(1-t)^M \le \max\{1,M\}t,
\]
and applying this with $t=1-r/R$ concludes the proof.

Thus we have shown that \ref{g-annular}--\ref{g-lip} are equivalent.
\medskip

Now assume that $\mu$ is 
 reverse-doubling at $x_0$, and that
$X$ supports a $1$-Poincar\'e inequality at $x_0$.

\ref{g-lip} $\imp$ \ref{g-extra}
Let $0< \rho < \frac{1}{3}\diam X$ and $0<\eps < \frac{1}{2} \rho$.
We have already shown that \ref{g-lip} $\imp$ \ref{g-annular},
so $\mu$ has the $1$-AD property at $x_0$, and 
in particular $\mu$ is doubling at $x_0$.
Thus Lemma~\ref{lem-reverse-decay-global} (with $q=1$) yields 
\[
    \frac{f(\rho)-f(\rho-\eps)}{\eps} 
    \simge \frac{\biggl(1-\displaystyle\frac{\rho-\eps}{\rho}\biggr) f(\rho)}{\eps}
    = \frac{f(\rho)}{\rho},
\]
showing that $f'(\rho) \simge f(\rho)/\rho$ whenever $f'(\rho)$ exists.

\ref{g-extra} $\imp$ \ref{g-lip}
If $X$ is unbounded this is trivial. 
So assume that $X$ is bounded.
If $\rho> \diam X$, then $f(\rho)=\mu(X)$ and $f'(\rho)=0$.
As $f$ is locally Lipschitz there is a constant $M$ such that
$f'(\rho) \le M$ for a.e.\ $\rho$ satisfying 
$\frac{1}{3}\diam X   < \rho <  \diam X$.
For such $\rho$ we have that
$f(\rho)/\rho \ge f(B_{\diam X /3})/{\diam X}$
and thus $\rho f'(\rho) \simle f(\rho)$ for a.e.\ $\rho > \frac{1}{3}\diam X$.
Together with \ref{g-extra} 
this yields \ref{g-lip}.
\end{proof}

In $\R^n$, the measure of a ball can be obtained by one-dimensional integration
of the surface measures of spheres.
To do the same in metric spaces 
we need the following lemma,
which is also useful for verifying 
the conditions in Theorem~\ref{thm-1-AD-global}.

\begin{lem}    \label{lem-AC}
Assume that $\mu$ is globally doubling
and that $X$ supports a global 
$q$-Poincar\'e inequality for some\/ $1\le q<\infty$. 
Assume, in addition, that there exists $a>0$
such that whenever\/ $0 < r < R \le 2r$ and $\de=R-r$,
for every $x\in B_{R}\setm B_{r}$ there exist 
$x'$ and $x''$ so that $B(x',a\de)\subset B(x,2\de)\cap B_{r}$ and 
$B(x'',a\de)\subset B(x,2\de)\setm B_{R}$.

Then the function $f(r):=\mu(B_r)$ is locally absolutely continuous
on\/ $(0,\infty)$.
\end{lem}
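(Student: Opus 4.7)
The plan is to mimic the covering argument used in the proof of Theorem~\ref{thm-annular-PI} in order to obtain the one-sided comparison
\[
\mu(B_R\setm B_r)\le C_0\,\mu(B_{R+2\de}\setm B_R)\qquad\text{whenever }0<r<R\le 2r,\ \de=R-r,
\]
where $C_0$ depends only on the doubling constant and on $a$, and then to leverage this estimate together with the $q$-Poincar\'e inequality to conclude absolute continuity. First I would take a maximal disjoint family $\{B(x_j,\de)\}$ with $x_j\in B_R\setm B_r$, whose doubles therefore cover $B_R\setm B_r$ and, by doubling, have bounded overlap; the outer-ball part of the geometric assumption supplies companions $B(x_j'',a\de)\subset B(x_j,2\de)\setm B_R\subset B_{R+2\de}\setm B_R$, and doubling gives $\mu(B(x_j,2\de))\simeq\mu(B(x_j'',a\de))$. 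Since the balls $B(x_j'',a\de)\subset B(x_j,2\de)$ inherit the bounded overlap, summing over $j$ yields the comparison.

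With the comparison in hand, the continuity of $f$ is an easy consequence: applying it with $r$ and $R$ replaced by $R$ and $R+\eps$ gives $f(R+\eps)-f(R)\le C_0[f(R+3\eps)-f(R+\eps)]$, which rearranges to $f(R+\eps)-f(R)\le\theta[f(R+3\eps)-f(R)]$ with $\theta=C_0/(1+C_0)<1$. Iterating $n$ times outward shows $\mu(\{y:d(y,x_0)=R\})=\lim_{\eps\to 0+}(f(R+\eps)-f(R))=0$, so $f$ has no jumps. Next, for local absolute continuity on a compact $[A,B]\subset(0,\infty)$, given any finite disjoint collection $(r_i,R_i)\subset[A,B]$ with $\sum_i(R_i-r_i)<\eta$, I would apply the comparison termwise to obtain
\[
\sum_i\mu(B_{R_i}\setm B_{r_i})\le C_0\sum_i\mu(B_{R_i+2\de_i}\setm B_{R_i}),
\]
and then control the right-hand sum by iterating the comparison at dyadic scales and invoking the $q$-Poincar\'e-based reverse-doubling estimate of Lemma~\ref{lem-reverse-decay-global}. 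This combination is designed to produce a uniform Lipschitz bound $f(R)-f(r)\simle R-r$ on $[A,B]$, which implies local absolute continuity.

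The main obstacle is this last step. The comparison and its iteration alone only yield a H\"older-type bound on $f$, and the right-extensions $(R_i,R_i+2\de_i)$ of the disjoint intervals $(r_i,R_i)$ can overlap and accumulate at a single radius, so a na\"\i ve bounded-overlap argument is unavailable. The $q$-Poincar\'e inequality, entering through the lower bound $\mu(B_{R+\sigma}\setm B_R)\simge(\sigma/R)^q\mu(B_R)$ from Lemma~\ref{lem-reverse-decay-global}, is precisely the ingredient needed to rule out the pathological accumulation of mass on a thin set of radii, and thereby to upgrade the H\"older control into the Lipschitz estimate that yields absolute continuity.
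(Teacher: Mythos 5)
Your covering construction and the one-sided comparison
\[
\mu(B_R\setm B_r)\le C_0\,\mu(B_{R+2\de}\setm B_R)
\]
are correct (this part uses only doubling and the outer half of the geometric condition, not the Poincar\'e inequality), and your deduction of continuity of $f$ from it is fine. However, the plan for the rest of the proof contains a genuine gap, and indeed a false intermediate target. The Lipschitz bound $f(R)-f(r)\simle R-r$ that you are aiming for simply does not hold under the hypotheses of the lemma: take $\R^n$ with the $A_1$-weight $w(\rho)=\max\{1,|\rho-1|^{\eta-1}\}$ of Example~\ref{ex-Buckley-new} ($0<\eta<1$). This measure is doubling, supports a global $1$-Poincar\'e inequality, and $\R^n$ satisfies the geometric condition, so the lemma applies; yet $f'(\rho)\simeq w(\rho)\rho^{n-1}\to\infty$ as $\rho\to1$, so $f$ is not locally Lipschitz there, only absolutely continuous. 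Moreover, the tool you propose to close the gap, the reverse-doubling bound $\mu(B_{R+\sigma}\setm B_R)\simge(\sigma/R)^q\mu(B_R)$ from Lemma~\ref{lem-reverse-decay-global}, is a \emph{lower} bound on annular mass, while what you need is an upper bound for the right-hand sum $\sum_i\mu(B_{R_i+2\de_i}\setm B_{R_i})$. It goes in the wrong direction and cannot control the pathological accumulation you correctly identify. And even if an iterated H\"older estimate were established, H\"older continuity does not imply absolute continuity, so some qualitatively different ingredient is required.

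What the paper does instead, and what is missing from your proposal, is to exploit the $q$-Poincar\'e inequality through a \emph{radial test function built on an arbitrary measurable set of radii}. For $E\subset I=(r,R)$ one sets
\[
u(x)=|E|-\int_0^{d(x_0,x)}\chi_E(\tau)\,d\tau,
\]
so that $u=|E|$ on $B_r$, $u=0$ off $B_R$, and $g_u(x)\le\chi_E(d(x_0,x))$. Running the same covering/Poincar\'e argument as in Theorem~\ref{thm-annular-PI} with this $u$ (this is where the inner-ball part of the geometric condition and the Poincar\'e inequality enter) produces, after summation over the cover, the two-measure comparison
\[
\Bigl(\frac{|E|}{|I|}\Bigr)^q\simle\frac{\nu(E)}{\nu(I)},\qquad \nu(E)=\mu(\{x:d(x_0,x)\in E\}),
\]
valid for \emph{all} measurable $E\subset I$, not just intervals. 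This directly yields $\nu\ll$ Lebesgue on $I$: if $|E|=0$ but $\nu(E)>0$, applying the inequality to $I_\eps\setm E$ for a small interval $I_\eps$ around a $\nu$-Lebesgue point of $1-\chi_E$ forces $1\simle\eps$, a contradiction. Your scheme, which only compares measures of concentric annuli, cannot produce such a comparison against arbitrary $E$, and that is exactly what absolute continuity requires.
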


The values of the constants $a$ and $2$ (in $B(x,2\de)$) are not important,
and by covering $(0,\infty)$ we can even allow them to be different
in different parts.
Thus, we can replace the last assumption by the following
condition: for each $k \in \Z$ there exist $a_k>0$ and $b_k \ge 2$ such that
whenever $4^{k} < r < R \le 2r < 4^{k+2}$ and $\de=R-r$, 
for every $x\in B_{R}\setm B_{r}$ there exist 
$x'$ and $x''$ so that $B(x',a_k\de)\subset B(x,b_k\de)\cap B_{r}$ and 
$B(x'',a_k\de)\subset B(x,b_k\de)\setm B_{R}$.

\begin{proof}
It suffices to show that the measure $\nu$ defined on $(0,\infty)$ by 
\[
\nu(E)=\mu(\{x\in X: d(x_0,x)\in E\})
\] 
is absolutely continuous with respect to the Lebesgue measure.

Let $0 < r < R \le 2r$, $\de=R-r$ 
and $I=(r,R)$.
We start by showing that for all measurable $E\subset I$,
\begin{equation}   \label{eq-comp-E-I}
\biggl( \frac{|E|}{|I|} \biggr)^q \simle \frac{\nu(E)}{\nu(I)},
\end{equation}
where $q$ is the exponent from the assumed global $q$-Poincar\'e inequality.
To this end, set for $t>0$,
\[
u(x)=|E|-\int_0^{d(x_0,x)} \chi_E(\tau)\,d\tau, \quad x \in X,
\]
and note that $u=|E|$ in $B_{r}$, 
$u=0$ outside $B_{R}$
and $g_u(x)\le \chi_E(d(x_0,x))$ a.e.

Let the balls $B_j$, $B'_j$ and $B''_j$ be as in the proof of 
Theorem~\ref{thm-annular-PI}.
Hence, in the same way as in the proof of Theorem~\ref{thm-annular-PI}, 
we obtain
\[
(1-c)|E| \simle \de \biggl( \vint_{\la B_j} g_u^q \,d\mu \biggr)^{1/q} 
\le |I| \biggl( \frac{\mu(\{x\in {\la B_j}: d(x_0,x)\in E\})}
            {\mu(\la B_j\cap(B_{R}\setm B_{r}))} \biggr)^{1/q},
\]
or equivalently,
\[
|E|^q \, \mu(\la B_j\cap(B_{R}\setm B_{r}))
\simle |I|^q \mu(\{x\in {\la B_j}: d(x_0,x)\in E\}).
\]
Since the balls $\la B_j$ cover the annulus $B_{R} \setm B_{r}$ and have  bounded
overlap, summing over all $j$ gives~\eqref{eq-comp-E-I}.

Now assume for a contradiction that there exists $E\subset (r,R)$
such that $|E|=0$ and $\nu(E)>0$.
As $\nu$ is a Radon measure on $(0,\infty)$, 
the Lebesgue 
differentiation theorem holds with respect to $\nu$,
see Remark~1.13 in Heinonen~\cite{heinonen}.
Thus, there exists at least one $x\in E$
which is a Lebesgue point with respect to $\nu$ of the function
$1-\chi_E$.
Hence, for every $\eps>0$, there is an interval $I_\eps$ such that 
$x \in I_\eps \subset I$ and 
$\nu(I_\eps\setm E)<\eps \nu(I_\eps)$.
Applying~\eqref{eq-comp-E-I} to $I_\eps\setm E$ and $I_\eps$ in place of $E$
and $I$ gives
\[
1 = \biggl( \frac{|I_\eps \setm E|}{|I_\eps|} \biggr)^q 
\simle \frac{\nu(I_\eps\setm E)}{\nu(I_\eps)} < \eps,
\]
which is impossible.
Thus, the assumption that $\nu(E)>0$ was false and we have shown that $\nu$
is absolutely continuous with respect to the Lebesgue measure on every
interval $(r,R)$,
and hence on $(0,\infty)$.
\end{proof}

The following result is now a direct consequence of 
Theorem~\ref{thm-1-AD-global} 
and Lemma~\ref{lem-AC}.

\begin{cor}
Under the assumptions of Lemma~\ref{lem-AC}, $\mu$ has the\/ $1$-AD property at $x_0$
if and only if the function $f(r):=\mu(B_r)$ 
satisfies $\rho f'(\rho) \simle f(\rho)$ for a.e.\ $\rho>0$.
\end{cor}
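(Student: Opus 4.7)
The plan is straightforward: this corollary is essentially the assembly of two results already proved earlier, namely Theorem~\ref{thm-1-AD-global} and Lemma~\ref{lem-AC}. The point is that Lemma~\ref{lem-AC} supplies exactly the missing regularity hypothesis (local absolute continuity of $f$) required to invoke the equivalence (a) $\Leftrightarrow$ (b) in Theorem~\ref{thm-1-AD-global}.

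For the forward direction, suppose $\mu$ has the $1$-AD property at $x_0$. Then the implication \ref{g-annular} $\imp$ \ref{g-ac} of Theorem~\ref{thm-1-AD-global} yields both that $f$ is locally absolutely continuous on $(0,\infty)$ and that $\rho f'(\rho)\simle f(\rho)$ for a.e.\ $\rho>0$; the second conclusion is what we need.

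For the reverse direction, assume $\rho f'(\rho)\simle f(\rho)$ for a.e.\ $\rho>0$. Since the assumptions of Lemma~\ref{lem-AC} are in force, that lemma guarantees that $f$ is locally absolutely continuous on $(0,\infty)$. Together with the a.e.\ bound $\rho f'(\rho)\simle f(\rho)$, this is precisely condition \ref{g-ac} of Theorem~\ref{thm-1-AD-global}, so the implication \ref{g-ac} $\imp$ \ref{g-annular} of that theorem delivers the $1$-AD property at $x_0$.

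There is essentially no obstacle here since the heavy lifting has already been done: Theorem~\ref{thm-1-AD-global} provides the equivalence modulo absolute continuity, and Lemma~\ref{lem-AC} supplies that absolute continuity from the doubling plus $q$-Poincar\'e plus geometric hypotheses. The only thing worth flagging is that in the backward direction, absolute continuity must come from Lemma~\ref{lem-AC} rather than being assumed, since the bare pointwise a.e.\ inequality $\rho f'(\rho)\simle f(\rho)$ alone (without absolute continuity) would be insufficient, as illustrated by the Cantor-type example cited after Theorem~\ref{thm-1-AD-global}.
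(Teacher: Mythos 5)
Your proof is correct and takes exactly the approach the paper intends: the corollary is stated as a direct consequence of Theorem~\ref{thm-1-AD-global} and Lemma~\ref{lem-AC}, and you correctly identify that Lemma~\ref{lem-AC} supplies the local absolute continuity needed to apply the equivalence of conditions \ref{g-annular} and \ref{g-ac}. Your remark about why absolute continuity must come from the lemma rather than being assumed (citing the Cantor staircase example) is a useful clarification that matches the paper's discussion.
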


\section{Counterexamples}
\label{sect-counterex}

In this section we provide a number of counterexamples
showing that most of our results are sharp.
The following example shows
the sharpness both of the Poincar\'e and AD assumptions in 
Theorem~\ref{thm-the-nice-case-intro}. 
It also shows sharpness of 
the Poincar\'e assumptions in several other results.

\begin{example} \label{ex-weighted-bow-tie}
(Weighted bow-tie)
Let 
\begin{equation} \label{eq-X}
  X=\bigl\{(x_1,\ldots,x_n)  : 
    x_2^2+\ldots+x_n^2  \le \tfrac{1}{4}x_1^2 \text{ and } -1 \le x_1 \le 2 \bigr\}
\end{equation}
as a subset of $\R^n$,  $n \ge 2$, and equip $X$ with the measure
$d\mu=|x|^\alp \, dx$, where $\alp > -n$.
(Additionally, we can make this example into a length space if 
we equip $X$ with the inner metric (see \cite[Definition~4.41]{BBbook}),
which only makes a difference when calculating distances
between the two sides of the origin.)
Note that the constant $2$ 
in the range of $x_1$ in \eqref{eq-X} above
 was chosen so that we can
have $R=1 \le \frac{1}{3} \diam X$ below
as required in Theorem~\ref{thm-the-nice-case-intro}.

If $q\ge 1$, then $X$ supports a global $q$-Poincar\'e inequality if and only if
$q > n+\alp$ or $q=1\ge n+\alp$, see Example~5.7 in \cite{BBbook}. Moreover,
$\mu$ is globally doubling.

Let $x_0=(-1,0,\ldots,0)$ and $\eta=\min\{1,n+\alp\}$.
Then for $0 < r < R <\diam X$ we have
\[ 
   \mu(B_R) \simeq R^n
   \quad \text{and} \quad
   \mu(B_R \setm B_r) 
   \simle \Bigl(1-\frac{r}{R}\Bigr)^\eta \mu(B_R),
\]
which shows that $\mu$ has the $\eta$-AD property at $x_0$.
One can check that this is the extreme case showing that $\mu$
has the  global $\eta$-AD property (and that $\eta$ is optimal).

If $0 < \de < \frac{1}{2}$, then 
\[
\mu(B_1 \setm B_{1-\de})
     \simeq  \int_0^\de \rho^{\alp} \rho^{n-1} \, d\rho
   \simeq \de^{n+\alp},
\]
which shows that the Poincar\'e assumption in Lemma~\ref{lem-reverse-decay-global}
cannot be weakened.
Moreover, by Lemma~\ref{lem-upper-simple}, 
\[ 
   \cp(B_{1-\de},B_1) \simle \frac{1}{\de^p}  \mu(B_1 \setm B_{1-\de})
   \simeq \de^{n+\alp-p},
\] 
which shows that \eqref{eq-thm-the-nice-case-intro} fails if $n+\alp >1$,
and thus we cannot replace the assumption of a global $1$-Poincar\'e inequality
in Theorem~\ref{thm-the-nice-case-intro} by a global $q$-Poincar\'e 
inequality for any fixed $q>1$.
Nor can the pointwise $q$-Poincar\'e inequality in 
Corollary~\ref{cor-Jana-2}
be replaced by assuming that $X$ supports a 
global $q'$-Poincar\'e inequality
for any fixed $q'>q$.

Conversely, if $0 < \de < \frac{1}{2}$ and $X$ supports 
a global \p-Poincar\'e inequality, i.e.\ if $p>n+\al$,
then a simple reflection argument and \cite[Proposition~10.8]{BBL}
imply that
\[
\cp(B_{1-\de}, B_1) \simge \cp(\{0\}, B(0,2\de) ) \simeq \de^{n+\alp-p}
\]
and hence
\[ 
   \cp(B_{1-\de},B_1) \simeq \de^{n+\alp-p}.
\] 
If $\eta=n+\alp <1$, then $X$ supports a global $1$-Poincar\'e inequality
and $\mu$ has the $\eta$-AD property at $x_0$, but
\eqref{eq-thm-the-nice-case-intro} fails.
Hence we cannot replace the $1$-AD assumption 
in Theorem~\ref{thm-the-nice-case-intro} by the $\eta$-AD property 
for any fixed $\eta <1$.
In fact, it is only the upper bound in \eqref{eq-thm-the-nice-case-intro}
that fails. The lower bound therein is still provided by 
Corollary~\ref{cor-Jana-2}.

Next, if $p=n+\alp>1$,
then $X$ supports a global $q$-Poincar\'e inequality for each $q>p$, 
but not a global \p-Poincar\'e inequality.
Moreover, by Example~5.7 in \cite{BBbook}, $\Cp(\{0\})=0$ 
and thus we can
test $\cp(B_{1-\de},B_1)$ with $u=\chi_{B_1}$ yielding
$\cp(B_{1-\de},B_1)=0$.
It also follows from Proposition~\ref{prop-connected}
that $X$ does not support a \p-Poincar\'e inequality at $x_0$.
Hence the \p-Poincar\'e assumption in
Proposition~\ref{prop-PI-lower-p=1} cannot be weakened if $p>1$.
Moreover, it also follows that it is not enough to assume that $X$ supports
global $q$-Poincar\'e inequalities for all $q>p$ in 
Theorems~\ref{thm-PI-and-AD} and \ref{thm-annular-PI} when $p>1$,
as well as for \eqref{eq-infty} in Proposition~\ref{prop-infty} to hold. 
As in this case we also have $\cp(B_{1},B_{1+\de})=0$, the
same is true for \eqref{eq-infty-outer} in Proposition~\ref{prop-infty}.

When $p=1<q$ we instead choose $n$ and $\alp$ so that $1 < n+ \alp < q$.
In particular, $X$ supports a global $q$-Poincar\'e inequality
in this case.
As above, $\cone(B_{1-\de},B_1)=0$ and
$X$ does not support a $1$-Poincar\'e inequality at $x_0$, showing
that the Poincar\'e assumption in 
Proposition~\ref{prop-PI-lower-p=1} is sharp also for $p=1$.
It also follows that when $p=1$ it is not enough 
to assume that $X$ supports
a global $q$-Poincar\'e inequality for some fixed $q>1$ in 
Theorem~\ref{thm-annular-PI}.

Let now, as in Theorem~\ref{thm-1-AD-global}, $f(r)=\mu(B_r)$.
Let $q>1$ and choose $n$ and $\alp$ 
so that $1<n+\alp <q$. 
Then $\mu$ has the global $1$-AD property and $X$ supports a 
global $q$-Poincar\'e
inequality.
For $\frac{1}{2} \le r < R \le 1$, with $R$ close to $r$, we see that
\[
   \mu(B_R \setm B_r) 
   \simeq (1-r)^\alp m(B_R\setm B_r)
   \simeq (1-r)^\alp (R-r) (1-r)^{n-1}
\]
where $m$ is the $n$-dimensional Lebesgue measure.
Hence 
\[ 
    rf'(r) = r \lim_{R\to r\limplus}\frac{\mu(B_R\setm R_r)}{R-r}
\simeq r (1-r)^{n+\alp-1} \not \simeq r 
\simeq r^n \simeq \mu(B_r)
\quad \text{when } \tfrac{1}{2} < r < 1.
\]
Thus condition \ref{g-extra} in Theorem~\ref{thm-1-AD-global}
fails,
which shows that it is not enough to assume that $X$ supports
a global $q$-Poincar\'e inequality for some fixed $q>1$ in 
(the last part of) Theorem~\ref{thm-1-AD-global}.
\end{example}

We do not have a counterexample to the conclusion of
Theorem~\ref{thm-the-nice-case}
which supports pointwise $q$-Poincar\'e inequalities at $x_0$ for all $q>1$.

\begin{example}\label{ex-starving-snake}
(This example was introduced by Tessera~\cite[p.\ 50]{Tessera} in
a different context. See also Routin~\cite[Section~6]{Routin}
for a more detailed discussion of this space.)
Let $X$ consist of the intervals
(with the natural embedding of $\R$ into $\R^2$)
$[0,1]$,
$[2^{k-1},2^k]$ for even positive $k$
and $[-2^{k},-2^{k-1}]$ for odd positive $k$,
and of the half-circles centred at the origin and of radius $2^k$ lying in the
upper half-plane for even nonnegative $k$ 
and in the lower half-plane for odd positive $k$. 
We equip $X$ with the Euclidean metric $d$ inherited from
$\R^2$ and the $1$-dimensional Hausdorff measure $\mu$. 
Then $X$ is Ahlfors $1$-regular
(see Proposition~6.1 in~\cite{Routin}).
Moreover, $X$ is bi-Lipschitz equivalent to the half-line $[0,\infty)\subset\R$, 
and hence  supports a global $1$-Poincar\'e inequality
(by \cite[Proposition~4.16]{BBbook}).

Let $x_0=0$, $k>0$ be an integer, $\de>0$ be small, 
$r=2^k-\de$ and  $R=2^k+\de$.
Then $\mu(B_R\setminus B_r)\simeq R\simeq \mu(B_R)$, which
shows that $\mu$ does not have the $\eta$-AD property at $x_0$
for any $\eta>0$.

Considering the function $u$ which is $1$ when $|x|<2^k$ and
$0$ when $|x|>2^k$, and decays linearly from $1$ to $0$ along the half-circle
of radius $2^k$, it is easy to see that for the above balls
$\cp(B_r,B_R)\simle R^{1-p} \simeq \mu(B_R)R^{-p}$.
Together with Proposition~\ref{prop-PI-lower-p=1} 
this shows that $\cp(B_r,B_R)\simeq \mu(B_R)R^{-p}$,
and thus the lower bound in Proposition~\ref{prop-PI-lower-p=1}
is sharp.
Moreover, the above estimate
shows that the geometric assumption in Theorem~\ref{thm-annular-PI}
and Corollary~\ref{cor-Jana-2} cannot be dropped, 
that the $\eta$-AD property cannot be replaced by global doubling
in Theorem~\ref{thm-PI-and-AD}, and also
that the assumption $\mu(\{y : d(y,x_0)=R\})=0$ cannot be dropped
for the limit \eqref{eq-infty-outer} in Proposition~\ref{prop-infty}
to hold, even if $X$ supports a global $1$-Poincar\'e inequality.
\end{example}

\begin{example} \label{ex-weighted-1D}
Let $w$ be a positive nonincreasing weight function on $X=[0,\infty)$,
$d\mu =w\, dx$ 
and $x_0=0$.
Assume that $\mu(B_1)<\infty$.
As $w$ is nonincreasing it is easy to see that $\mu$ is
doubling at $x_0$.
Let $f$ be an integrable function on $X$
with an upper gradient $g$,
and let $B=B(x,r) \subset X$ be a ball. Then either $B=(a,b)$ with $0 \le a<b$ or
$B=[a,b)$ with $a=0<b$. In either case we have
\begin{align*}
   \vint_{B} |f -f(a)| \, d\mu 
   & \le \frac{1}{\mu(B)} \int_a^b \int_a^t g(x) \, dx \, d\mu(t) 
    = \frac{1}{\mu(B)} \int_a^b \int_x^b  \, d\mu(t) g(x) \, dx \\
   & \le \frac{1}{\mu(B)} \int_a^b r w(x) g(x) \, dx 
    = r  \vint_{B}  g \, d\mu.
\end{align*}
It thus follows from Lemma~4.17 in \cite{BBbook} 
 that $X$ supports a global $1$-Poincar\'e inequality. 

Moreover, if $0 <\frac{1}{2}R  \le r <R$, then
\[
  \mu(B_R \setm B_r) 
   \le w(r) (R-r) 
   \le \frac{\mu(B_r)}{r} (R-r)
   \le 2 \Bigl(1-\frac{r}{R}\Bigr) \mu(B_R).
\]
On the other hand, if $0 < 2r <R$, then
\[
  \mu(B_R \setm B_r) 
  \le \mu(B_R)
   \le 2 \Bigl(1-\frac{r}{R}\Bigr) \mu(B_R).
\]
Hence $\mu$ has the $1$-AD property at $x_0$.

So far we have just assumed that $w$ is nonincreasing, but 
now assume that $w(x)=\min\{1,1/x\}$.
If $R>2$, then by Lemma~\ref{lem-upper-simple}, 
\[
   \cp(B_{R/2},B_{R}) 
   \simle \frac{\mu(B_R \setm B_{R/2})}{R^p}
   = \frac{\log 2}{R^p},
\]
while the right-hand sides (with $r=\frac{1}{2}R$) in
Theorem~\ref{thm-PI-and-AD}, 
Proposition~\ref{prop-PI-lower-p=1} and
Theorem~\ref{thm-the-nice-case}
are larger than this 
when $R$ is large enough,
since $\mu(B_R) \to \infty$, as $R \to \infty$.
In particular it follows that $\mu$ cannot be reverse-doubling at $x_0$
(which also follows directly from $\mu(B_{R}\setm B_{R/2})=\log 2$)
and that the reverse-doubling assumption in 
Theorem~\ref{thm-PI-and-AD}, 
Proposition~\ref{prop-PI-lower-p=1} and
Theorem~\ref{thm-the-nice-case}
cannot be dropped.

Moreover, as $\mu(B_R \setm B_{R/2})=\log 2$ and $\mu(B_R) \to \infty$
as $R \to \infty$, the inequality
in Lemma~\ref{lem-reverse-decay-global} fails in this case,
showing that the reverse-doubling assumption in
Lemma~\ref{lem-reverse-decay-global} cannot be dropped either.

Write 
$f(r)=\mu(B_r)$, as in Theorem~\ref{thm-1-AD-global}.
Then $f(r)=1+\log r$ for $r \ge 1$.
For $\rho >1$ we have $\rho f'(\rho)=1 \not \simeq f(\rho)$, 
so condition \ref{g-extra} in Theorem~\ref{thm-1-AD-global}
fails.
Thus the reverse-doubling assumption in 
(the last part of) Theorem~\ref{thm-1-AD-global}
cannot be dropped.

Finally, if we instead
let $w(r)=e^{-r}$, then $\mu(B_R)= 1- e^{-R}$. 
By Lemma~\ref{lem-upper-simple} we have for $R>1$ that
\[
   \cp(B_{R/2},B_R) \le \cp(B_{3R/4},B_R)
   \simle \frac{\mu(B_R \setm B_{3R/4})}{R^p} 
   \simle \frac{e^{-3R/4}}{R^p}. 
\]
As $\mu(B_R \setm B_{R/2}) \simeq e^{-R/2}$ for $R>1$,
this shows that the estimate in Theorem~\ref{thm-annular-PI}
fails in this case. Thus
the global doubling assumption therein cannot be replaced by assuming
that $\mu$ is doubling at $x_0$.
Note that the geometric assumption in Theorem~\ref{thm-annular-PI}
is satisfied in this case.
\end{example}

\begin{example} \label{ex-weighted-1D-inc}
Let this time $w$ be a positive nondecreasing weight function on $X=[0,\infty)$,
$d\mu =w\, dx$ 
and $x_0=0$.
As in Example~\ref{ex-weighted-1D}
we get that
$X$ supports a global $1$-Poincar\'e inequality
(estimate
using the right end point of the ball instead of the left end point).

Let $B$ be a ball with right end point $b$. Then 
$\mu(2B \setm B) \ge \mu(\{x \in 2B : x >b\}) \ge \frac{1}{2} \mu(B)$,
as $w$ is nondecreasing. Hence $\mu$ is globally reverse-doubling,
i.e.\ reverse-doubling at
every $x \in X$ with uniform constants.

Now let 
\[
  w(x)=\begin{cases}
    e^{-1/x}/x^2, & 0 \le x \le \tfrac{1}{2}, \\
    4e^{-2}, & x \ge \tfrac{1}{2},
    \end{cases}
\]
which is a continuous nondecreasing function
such that $\mu(B_R)=e^{-1/R}$ when $0< R < \tfrac{1}{2}$.
By Lemma~\ref{lem-upper-simple},
\[
   \cp(B_{R/2},B_R) 
   \le \cp(B_{R/2},B_{3R/4}) 
   \simle \frac{\mu(B_{3R/4})}{R^p}.
\]
As 
\[
   \frac{\mu(B_{3R/4})}{\mu(B_R)} = e^{-1/3R} \to 0,
   \quad \text{as } R \to 0\limplus,
\]
we see that the lower bound in Proposition~\ref{prop-PI-lower-p=1} fails and
that the doubling assumption cannot be dropped therein.
As $\mu(B_R) \simeq \mu(B_R \setm B_{R/2})$, this also shows that 
the estimate 
in Theorem~\ref{thm-annular-PI}
fails in this case. Thus
the global doubling assumption therein cannot be replaced by assuming
that $\mu$ is globally reverse-doubling.
Note that the geometric assumption in Theorem~\ref{thm-annular-PI}
is satisfied in this case.
\end{example}

In the last example $\mu(B_r) \not \simeq \mu(B_R)$
for $0<\tfrac{1}{2} R\le r < R$ and it is
natural to ask if it is possible to get the lower bound
$\mu(B_r)r^{-p}$ in Proposition~\ref{prop-PI-lower-p=1} 
without assuming that $\mu$ is doubling at $x_0$. 
At least for $p=1$ this is in fact possible.

\begin{prop}\label{prop-PI-lower-p=1-no-doubl}
Assume that $X$ supports a\/ $1$-Poincar\'e inequality at $x_0$
and that $\mu$ is  reverse-doubling at $x_0$ 
with dilation $\tau>1$.
Then
\begin{equation} \label{eq-PI-and-AD-p=1-no-doubl}
\cone(B_r,B_R)\simge \frac{\mu(B_r)}{r},
\quad \text{if\/ }0<\frac{R}{2}\le r<R \le \frac{\diam X}{2\tau}.
\end{equation}
\end{prop}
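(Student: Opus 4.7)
The plan is to adapt the argument in the proof of Proposition~\ref{prop-PI-lower-p=1}, but to avoid the step where one passes from $B_{R/2}$ to $B_{\tau R}$ in the mean, which is exactly where global doubling at $x_0$ was used. For $p=1$ this passage is unnecessary because we do not need to normalize by $\mu(B_{\tau R})$ after applying the Poincar\'e inequality: we can keep an unweighted $L^1$ integral of $|u-u_{B_{\tau R}}|$ and only need a lower bound over $B_r$, which will produce $\mu(B_r)$ directly.

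First, I fix a $u$ admissible for $\cone(B_r,B_R)$, so that $u=1$ on $B_r$, $u=0$ outside $B_R$, and $g_u=0$ a.e.\ outside $B_R\setm B_r$. By the reverse-doubling at $x_0$, $\mu(B_{\tau R})\ge \gamma\mu(B_R)$ with $\gamma>1$, which gives
\[
u_{B_{\tau R}}=\frac{1}{\mu(B_{\tau R})}\int_{B_R} u\,d\mu\le \frac{\mu(B_R)}{\mu(B_{\tau R})}\le\frac{1}{\gamma},
\]
so $1-u_{B_{\tau R}}\ge (\gamma-1)/\gamma>0$. This is the only place where reverse-doubling enters.

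Next, I apply the $1$-Poincar\'e inequality at $x_0$ to the ball $B_{\tau R}$:
\[
\int_{B_{\tau R}}|u-u_{B_{\tau R}}|\,d\mu \simle \tau R\,\mu(B_{\tau R})\vint_{B_{\tau\lambda R}} g_u\,d\mu
\simle R\int_{B_{\tau\lambda R}} g_u\,d\mu = R\int_X g_u\,d\mu,
\]
using $\mu(B_{\tau R})\le\mu(B_{\tau\lambda R})$ and the fact that $g_u$ is supported in $B_R\setm B_r\subset B_{\tau\lambda R}$. At the same time, since $u=1$ on $B_r$,
\[
\int_{B_{\tau R}}|u-u_{B_{\tau R}}|\,d\mu \ge \int_{B_r}|1-u_{B_{\tau R}}|\,d\mu = (1-u_{B_{\tau R}})\mu(B_r)\simge \mu(B_r).
\]

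Combining the two displays yields $\mu(B_r)\simle R\int_X g_u\,d\mu$, and using $\tfrac{R}{2}\le r<R$ this becomes $\int_X g_u\,d\mu\simge \mu(B_r)/r$. Taking the infimum over all admissible $u$ gives~\eqref{eq-PI-and-AD-p=1-no-doubl}. The main (and essentially only nontrivial) obstacle was noticing that the doubling step in the proof of Proposition~\ref{prop-PI-lower-p=1} can be bypassed for $p=1$ by keeping an unnormalized $L^1$ norm and estimating it from below on $B_r$ rather than on $B_{R/2}$; for $p>1$ the same argument would require H\"older's inequality and would reintroduce a factor involving $\mu(B_R\setm B_r)^{1-1/p}$, which cannot be controlled without further assumptions.
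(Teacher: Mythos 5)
Your proof is correct and is essentially the same as the paper's: both use reverse-doubling to get $u_{B_{\tau R}}\le 1/\gamma$, apply the $1$-Poincar\'e inequality on $B_{\tau R}$, and lower-bound the $L^1$ oscillation using $u\equiv 1$ on $B_r$. You write the chain of inequalities in unnormalized form while the paper keeps normalized averages, but the steps and cancellations are identical.
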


Example~\ref{ex-weighted-bow-tie} shows that
the $1$-Poincar\'e assumption cannot be weakened, even if
it is assumed globally.
Example~\ref{ex-weighted-1D} shows that the reverse-doubling
assumption cannot be dropped.

\begin{proof}
Let $u$ be admissible for $\cone(B_r,B_R)$.
As in the proof of Theorem~\ref{thm-PI-and-AD}, we get that
\begin{align*}
1 & \simle\vint_{B_{r}}|u-u_{B_{\tau R}}|\,d\mu 
   \le \frac{\mu(B_{\tau R})}{\mu(B_r)}\vint_{B_{\tau R}}|u-u_{B_{\tau R}}|\,d\mu \\
&  \simle R\, \frac{\mu(B_{\tau R})}{\mu(B_r)} \vint_{B_{\tau\lambda  R}} g_{u}\,d\mu 
 \simle \frac{r}{\mu(B_r)} \int_{B_{R}\setminus B_r} g_{u} \,d\mu,
\end{align*} 
and \eqref{eq-PI-and-AD-p=1-no-doubl} follows after taking infimum over all
admissible $u$.
\end{proof}


\begin{thebibliography}{99}

\bibitem{AikSh05} \art{\auth{Aikawa}{H} \AND \auth{Shanmugalingam}{N}}
        {Carleson-type estimates for \p-harmonic functions and the
        conformal Martin boundary of 
	John domains in metric measure spaces}
        {Michigan Math. J.}{53}{2005}{165--188}

\bibitem{BBbook} \book{\auth{Bj\"orn}{A} \AND \auth{Bj\"orn}{J}}
        {Nonlinear Potential Theory on Metric Spaces}
    {EMS Tracts in Mathematics {\bf 17},
        European Math. Soc., Z\"urich, 2011}

\bibitem{BBvarcap} \art{\auth{Bj\"orn}{A} \AND \auth{Bj\"orn}{J}}	
        {The variational capacity with respect to nonopen sets in metric spaces}
	{Potential Anal.} {40} {2014} {57--80}

\bibitem{tree} \arttoappear{\auth{Bj\"orn}{A}, \auth{Bj\"orn}{J}, \auth{Gill}{J} 
       \AND \auth{Shan\-mu\-ga\-lin\-gam}{N}}
        {Geometric analysis on Cantor sets and trees}
        {J. Reine Angew. Math.},  {\tt arXiv:1302.3887}.

\bibitem{BBL} \artprep{\auth{Bj\"orn}{A}, \auth{Bj\"orn}{J}
                         \AND \auth{Lehrb\"ack}{J}}
        {Sharp capacity estimates for annuli in weighted $\R^n$ and in metric spaces}
        {Preprint, \textup{2013}, {\tt arXiv:1312.1668}}

\bibitem{JB-Fenn} \art{\auth{Bj\"orn}{J}}
        {Poincar\'e inequalities for powers and products of admissible weights}
        {Ann. Acad. Sci. Fenn. Math.} {26}{2001}{175--188}

\bibitem{Buck}\art {\auth{Buckley}{S}} 
        {Is the maximal function of a Lipschitz function continuous?}
        {Ann. Acad. Sci. Fenn. Math.} {24} {1999} {519--528}

\bibitem{ColdingMiniCPAM98} \art{\auth{Colding}{T. H} \AND
   \authvar{Minicozzi}{W. P}{II}}
   {Liouville theorems for harmonic sections and applications}
   {Comm. Pure Appl. Math.} {51} {1998} {113--138}

\bibitem{DaJoSe85} \art{\auth{David}{G}, \auth{Journ\'e}{J.-L}
    \AND \auth{Semmes}{S}}
  {Op\'erateurs de Calder\'on--Zygmund, fonctions para-accr\'etives et interpolation}
  {Rev. Mat. Iberoam.} {1{\rm :4}} {1985} {1--56}

\bibitem{garcia-cuerva} 
        \book{\auth{Garc\'\i a-Cuerva}{J}
        \AND \auth{Rubio de Francia}{J. L}}
        {Weighted Norm Inequalities and Related Topics}
        {North-Holland, Amsterdam, 1985}

\bibitem{Haj03} \artin{\auth{Haj\l asz}{P}}
	{Sobolev spaces on metric-measure spaces}
	{\emph{Heat Kernels and Analysis on Manifolds\textup{,} Graphs and
	Metric Spaces\/ \textup{(}Paris\textup{,} 2002\/\textup{)}}, 
	Contemp. Math. {\bf 338}, pp. 173--218,
	Amer. Math. Soc., Providence, RI, 2003}

\bibitem{heinonen} \book{\auth{Heinonen}{J}}
        {Lectures on Analysis on Metric Spaces}
        {Springer, New York, 2001}

\bibitem{HeKiMa} \book{\auth{Heinonen}{J}, 
	\auth{Kilpel\"ainen}{T}
	\AND \auth{Martio}{O}}
        {Nonlinear Potential Theory of Degenerate Elliptic Equations}
        {2nd ed., Dover, Mineola, NY, 2006}

\bibitem{HeKo98} \art{\auth{Heinonen}{J} \AND \auth{Koskela}{P}}
	{Quasiconformal maps in metric spaces with controlled geometry}
	{Acta Math.} {181} {1998} {1--61}

\bibitem{HKSTbook} \book{\auth{Heinonen}{J}, \auth{Koskela}{P},
	\auth{Shanmugalingam}{N} \AND \auth{Tyson}{J. T}}
       {Sobolev Spaces on Metric Measure Spaces}
	{New Math. Monographs {\bf 27}, Cambridge Univ. Press, Cambridge, 2015}

\bibitem{KeZh} \art{\auth{Keith}{S} \AND \auth{Zhong}{X}}
        {The Poincar\'e inequality is an open ended condition}
        {Ann. of Math.}{167}{2008}{575--599}

\bibitem{Koskela} \art{\auth{Koskela}{P}}
	{Removable sets for Sobolev spaces}
	{Ark. Mat.} {37} {1999} {291--304}

\bibitem{KoMc} \art{\auth{Koskela}{P} \AND \auth{MacManus}{P}}
        {Quasiconformal mappings and Sobolev spaces}
        {Studia Math.}{131}{1998}{1--17}

\bibitem{Routin} \art{\auth{Routin}{E}} 
        {Distribution of points and Hardy type inequalities in spaces 
          of homogeneous type}
        {J. Fourier Anal. Appl.} {19} {2013}  {877--909} 

\bibitem{Sh-rev} \art{\auth{Shanmugalingam}{N}}
        {Newtonian spaces\textup{:} An extension of Sobolev spaces
        to metric measure spaces}
        {Rev. Mat. Iberoam.}{16}{2000}{243--279}

\bibitem{Sh-harm} \art{\auth{Shanmugalingam}{N}}
        {Harmonic functions on metric spaces}
        {Illinois J. Math.}{45}{2001}{1021--1050}

\bibitem{Tessera} \art{\auth{Tessera}{R}}
        {Volume of spheres in doubling metric measured spaces and in groups of polynomial growth} 
        {Bull. Soc. Math. France}{135}{2007}{47--64} 

\end{thebibliography}
\end{document}